\renewcommand*\libertine@figurestyle{LF}
\renewcommand*\libertine@figurestyle{OsF}
\crefname{lemma}{lemma}{lemmata}
\Crefname{lemma}{Lemma}{Lemmata}
\crefname{subsection}{subsection}{subsections}
\Crefname{subsection}{Subsection}{Subsections}
\newtheorem{theorem}{Theorem}[section]
\newtheorem{lemma}[theorem]{Lemma}
\newtheorem{proposition}[theorem]{Proposition}
\newtheorem{corollary}[theorem]{Corollary}
\newtheorem*{fact}{Fact}
\theoremstyle{definition}
\newtheorem{definition}[theorem]{Definition}
\newtheorem{remark}[theorem]{Remark}
\newtheorem{algorithm}[theorem]{Algorithm}
\newtheorem{example}[theorem]{Example}
\newtheorem{construction}[theorem]{Construction}
\title{Bi-pruned Hurwitz numbers}
\author[M.~A.~Hahn]{Marvin Anas Hahn}
\address{M.~A.~Hahn: Mathematisches Institut, Universit\"at T\"ubingen, Auf der Morgenstelle 10, 72076 T\"ubingen, Germany.}
\email{marvin-anas.hahn@uni-tuebingen.de}
\keywords{Hurwitz numbers, Hurwitz galaxies, ribbon graphs}
\subjclass[2010]{14N10, 05C30, 05A15}
\begin{document}
\begin{abstract}
Hurwitz numbers enumerate ramified coverings of the Riemann sphere with fixed ramification data. Certain kinds of ramification data are of particular interest, such as double Hurwitz numbers, which count covers with fixed arbitrary ramification over $0$ and $\infty$ and simple ramification over $b$ points, where $b$ is given by the Riemann-Hurwitz formula. In this work, we introduce the notion of \textit{bi-pruned double Hurwitz numbers}. This is a new enumerative problem, which yields smaller numbers but completely determines double Hurwitz numbers. They count a relevant subset of covers and share many properties with double Hurwitz numbers, such as piecewise polynomial behaviour and an expression in the symmetric group. Thus, we may view them as a core of the double Hurwitz numbers problem. This work is built on and generalises previous work of Do--Norbury \cite{DNpruned} and the author \cite{Hahnpruned}.
\end{abstract}

\maketitle

\section{Introduction}
Hurwitz numbers were introduced in the late 19th century by A. Hurwitz in \cite{Hurwitzverzweigung} as an enumeration of ramified coverings of the Riemann sphere with fixed ramification. These invariants are related to several branches of mathematics, such as algebraic geometry, algebraic topology, tropical geometry, representation theory of the symmetric group, operator theory, free probability theory and more. Hurwitz numbers have many variants and satisfy definitions in several different settings. Some variants have turned out to be of greater importance than others. One of the more important Hurwitz numbers are those initially studied by Hurwitz, which are now called \textit{single Hurwitz numbers} and their generalisation by Okounkov called \textit{double Hurwitz numbers} \cite{Okounkovdouble}.\par
\textbf{Single Hurwitz numbers.} Single Hurwitz numbers count those ramified covers with arbitrary ramification over $0$ and simple ramification over $b$ other points given by the Riemann-Hurwitz formula. They satisfy many fascinating properties, among which is a cut-and-join recursion and polynomial behaviour up to a combinatorial pre-factor, the latter being a direct consequence of the celebrated ELSV formula, which expresses single Hurwitz numbers as intersection numbers on the moduli space of stable curves with marked points $\overline{\mathcal{M}}_{g,n}$. Moreover, single Hurwitz numbers are connected to the powerful Chekhov-Eynard-Orantin (CEO) topological recursion. CEO topological recursion is a formalism, which associates to a spectral curve a family of differentials on a Riemann surface. Those differentials satisfy a recursive structure. For many enumerative invariants, there exist spectral curves, such that the associated differentials encode these invariants as coefficients of local expansion. This is true for single Hurwitz numbers as well. One also says \textit{single Hurwitz numbers satisfy CEO topological recrusion} \cite{EOtopological}.\par 
\textbf{Double Hurwitz numbers.} Double Hurwitz numbers count ramified covers with arbitrary ramification over $0$ and $\infty$ and simple ramification over $b$ points given by the Riemann-Hurwitz formula. They satisfy many properties parallel to the ones for single Hurwitz numbers, such as a cut-and-join recursion and piecewise polynomial behaviour \cite{GJtransitive,GJVdouble}. Progress towards topological recursion has been made in \cite{ACEHweightedhurwitz}.\par
\textbf{Pruned Hurwitz numbers.} In \cite{DNpruned}, it was proved that one can reduce the single Hurwitz numbers computation to a subset of the involved covers. This new enumerative problem ist called \textit{pruned single Hurwitz number} and determines the usual single Hurwitz number completely. Moreover, it was proved by Do--Norbury, that the pruned single Hurwitz numbers satisfy a cut-and-join recursion, polynomial behaviour, an interpretation in the symmetric group, an interpretation in terms of intersection numbers and topological recursion. In order to define pruned single Hurwitz numbers, one considers a graph theoretic interpretation of single Hurwitz numbers in terms of so-called \textit{branching graphs} \cite{OPhurwitz}. There is a bijection branching graphs and covers contributing to the single Hurwitz numbers. In these graphs, faces correspond to pre-images of $\infty$, vertices to pre-images of $0$ and edges to simple branch points. One defines pruned single Hurwitz numbers as the sum over those covers, which correspond to branching graphs without leaves, i.e. without $1-$valent vertices. Moreover, one obtains the correspondence between single Hurwitz and pruned single Hurwitz numbers by consecutively removing leaves of branching graphs and examining the combinatorics of this process. This process of removing leaves is called the \textit{pruning process}. In \cite{Hahnpruned}, the notion of \textit{pruned double Hurwitz numbers} was introduced, which generalises the notion of pruned single Hurwitz numbers. These invariants are defined analgously by considering an analog of branching graphs suitable for double Hurwitz numbers. Once again, the pruning process, i.e. consecutively removing leaves yields a correspondence result, i.e. pruned double Hurwitz numbers completely determine double Hurwitz numbers and vice versa. Moreover, parallel to the single Hurwitz numbers case, pruned double Hurwitz numbers share many structural properties with their usual counterparts. In particular, they satisfy piecewise polynomial behaviour, a cut-and-join recursion and an interpretation in the symmetric group.\par
In this work, we introduce the notion of \textit{bi-pruned double Hurwitz numbers}. Instead of considering branching graphs corresponding to covers contributing to the double Hurwitz numbers, we focus on a different class of graphs -- so-called \textit{Hurwitz galaxies}. As for branching graphs, there exists a bijection between covers contributing to the double Hurwitz number and Hurwitz galaxies \cite{Johnsontropicalization}. These graphs consist of bi-coloured faces and encode covers as follows: White faces correspond to pre-images of $0$, black faces to pre-images of $\infty$, $2-$valent vertices to unramified pre-images and $4-$valent vertices to ramified pre-images of simple branch points.\par 
In this setting, the pruning process corresponds to removing certain white faces, which we call \textit{loop faces}. The change of perspective by considering Hurwitz galaxies instead of branching graphs now brings the advantage that we can also remove black loop faces, i.e. pre-images of $\infty$. By consecutively removing loop faces corresponding to pre-images of $0$ and $\infty$ one obtains significantly smaller graphs in comparison to \cite{Hahnpruned}. This is due to the fact that removing white loop faces creates new black loop faces and vice versa.\par 
We prove a correspondence theorem, expressing double Hurwitz numbers in terms of bi-pruned double Hurwitz numbers. Further, we prove that bi-pruned double Hurwitz numbers are piecewise polynomial and admit an interpretation in the symmetric group. This follows the philosophy of \cite{DNpruned} and \cite{Hahnpruned} defining the core of the Hurwitz numbers problem and generalises the results from \cite{Hahnpruned}.

\subsection{Structure of the paper}
In \cref{sec:pre}, we introduce the relevant basic notions concerning Hurwitz numbers and Hurwitz galaxies, which are our main technical tool. We continue in \cref{sec:main} by defining bi-pruned Hurwitz numbers and introducing the necessary tools to state our main theorem. We further make two structural observations in \cref{sec:struc}. Lastly, we prove our main theorem in \cref{sec:proof}.

\subsection{Acknowledgements}
The author thanks Hannah Markwig for her careful proofreading and useful comments. Further, the author is grateful for interesting discussions with Maxim Karev and Felix Leid. Many computations for this project have been made using GAP \cite{GAP}. The author gratefully acknowledges partial support by DFG SFB-TRR 195 Symbolic tools in mathematics and their applications, project A 14 Random matrices and Hurwitz numbers (INST 248/238-1).

\section{Preliminaries}
\label{sec:pre}
Before we recall the relevant basic notions, we introduce the notation $[m]\coloneq\{1,\dots,m\}$ for $m\in\mathbb{Z}_{\ge1}$. For a more in-depth introduction to Hurwitz numbers, we recommend \cite{CMfirstcourse}.

\begin{definition}
Let $\mu$ and $\nu$ be partitions of the same positive integer $d$, let $g$ be a non-negative integer and denote $b=2g-2+\ell(\mu)+\ell(\nu)$. We define a Hurwitz cover of type $(g,\mu,\nu)$ as a morphism $f:S\to \mathbb{P}^1$, such that
\begin{enumerate}[(i)]
\item $f$ is of degree $d$,
\item $S$ is a surface of genus $g$,
\item $f$ ramifies with profile $\mu$ over $0$ and profile $\nu$ of $\infty$,
\item the preimages of $0$ (resp. $\infty$) are labelled by $1,\dots,\ell(\mu)$ (resp. $1,\dots,\ell(\nu)$),
\item $f$ ramifies with profile $(2,1,\dots,1)$ over the $b-$th roots of unity.
\end{enumerate}
We call two Hurwitz covers $f_1:S_1\to\mathbb{P}^1$ and $f_2:S_2\to\mathbb{P}^1$ of type $(g,\mu,\nu)$ isomorphic, if there exists a homeomorphism $h:S_1\to S_2$, such that $f_1=f_2\circ h$. Then we define the double Hurwitz number associated to the above data by
\begin{equation}
h_g(\mu,\nu)=\sum \frac{1}{|\mathrm{Aut}(f)|},
\end{equation}
where we some over all equivalence classes of Hurwitz covers $f$ of type $(g,\mu,\nu)$.
\end{definition}

These numbers can be counted in terms of so-called \textit{Hurwitz galaxies}. These are obtained as follows: We fix the data $\mu,\nu,g$ as before and consider the graph $\Gamma_b$ whose vertices are the $b-$th roots of unity and whose edges form a cycle along the unit circle. For each Hurwitz cover $f:S\to\mathbb{P}^1$ of type $(g,\mu,\nu)$, we obtain a graph on $S$ by pulling back $\Gamma_b$ along $f$. The following definition is a characterisation of these graphs.

\begin{definition}
 \label{def:hurwitzgalaxy}
 A Hurwitz galaxy of type $(g,\mu,\nu)$ is a graph $G$ on an oriented surface $S$ of genus $g$, such that for $b=2g-2+\ell(\mu)+\ell(\nu)$:
 \begin{enumerate}[(i)]
  \item $S\backslash G$ is a disjoint union of open disks,
  \item $G$ partitions $S$ into $\ell(\mu)+\ell(\nu)$ disjoint faces,
  \item these faces are coloured black and white, such that $\ell(\nu)$ many faces are coloured black and $\ell(\mu)$ many faces are coloured white, such that each edge is incident to a white face on one side and to a black face on the other side,
  \item the white (resp. black) faces are labelled by $1,\dots,\ell(\mu)$ (resp. $1,\dots,\ell(\nu)$), such that a face labelled $i$ (resp. $j$) is bounded by $\mu_i\cdot b$ (resp. $\nu_j\cdot b$) vertices and we call $\mu_i$ (resp. $\nu_j$) the \textit{perimeter} of the face labelled $i$ (resp. $j$),
  \item the vertices in the boundary of white faces are labelled cyclically counterclockwise by $1,\dots,b$ (this implies that the vertices in the boundary of black faces are labelled cyclically clockwise by $1,\dots,b$),
  \item for each $i\in\{1,\dots,b\}$, there are $d-2$ vertices labelled $i$, which are 2-valent and one vertex labelled $i$, which is 4-valent.
 \end{enumerate}
 An isomorphism of Hurwitz galaxies is a homeomorphism of the underlying surfaces which restricts to a graph isomorphism respecting all labellings.
 \end{definition}
 
As mentioned above Hurwitz numbers can be computed in terms of Hurwitz galaxies.
 
\begin{theorem}[\cite{OPhurwitz}]
Let $g,\mu,\nu$ be data as before, then
\begin{equation}
h_g(\mu,\nu)=\sum \frac{1}{|\mathrm{Aut}(G)|},
\end{equation} 
where we sum over all isomorphism classes of Hurwitz galaxies of type $(g,\mu,\nu)$.
 \end{theorem}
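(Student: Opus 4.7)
The plan is to establish a bijection between isomorphism classes of Hurwitz covers $f:S\to\mb{P}^1$ of type $(g,\mu,\nu)$ and isomorphism classes of Hurwitz galaxies $G$ of the same type, compatible with the automorphism groups on both sides; once this is in place, equating
\begin{equation*}
\sum_{[f]} \frac{1}{|\mathrm{Aut}(f)|} = \sum_{[G]} \frac{1}{|\mathrm{Aut}(G)|}
\end{equation*}
is automatic.

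\textbf{Forward direction.} Given a Hurwitz cover $f$, I set $G\coloneq f^{-1}(\Gamma_b)\subset S$ and pull back all decorations from $\Gamma_b$: the cyclic labelling $1,\dots,b$ of the $b$-th roots of unity gives a labelling of the vertices of $G$, and the two components of $\mb{P}^1\setminus\Gamma_b$ --- the open disks around $0$ and $\infty$ --- give a black/white bipartition of the faces. Since $\mu,\nu$ and the simple ramification data exhaust the branch locus of $f$, the restriction $S\setminus G\to\mb{P}^1\setminus\Gamma_b$ is an unramified covering of two disjoint open disks, hence itself a disjoint union of open disks (axiom (i)); the face labelling is inherited from the labelling of preimages of $0$ and $\infty$ (axiom (iv)), and the local model $z\mapsto z^{\mu_i}$ at a preimage of $0$ of ramification index $\mu_i$ causes the boundary of the corresponding white face to traverse $\Gamma_b$ exactly $\mu_i$ times, which accounts for the perimeter statement and for the colour alternation across edges (axioms (iii)--(iv)). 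Axiom (v) is the cyclic pullback of the labels, and axiom (vi) follows directly from the profile $(2,1,\dots,1)$ over every $b$-th root of unity.

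\textbf{Inverse direction.} Conversely, given $G$ on $S$, I reconstruct $f$ face-by-face and glue along edges. On the white face labelled $i$, I take the unique ramified cover of the closed unit disk $\overline{D}_0\subset\mb{P}^1$ totally branched of order $\mu_i$ over $0$; on the black face labelled $j$, the analogous cover of the complementary closed disk $\overline{D}_\infty$ totally branched of order $\nu_j$ over $\infty$. The cyclic labellings of axiom (v) together with the black/white alternation across edges (axiom (iii)) specify how the boundaries of neighbouring face maps must be identified, and axiom (vi) dictates the local model at each vertex to be either $z\mapsto z$ (at a $2$-valent vertex) or $z\mapsto z^2$ (at a $4$-valent one). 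The resulting continuous map $f:S\to\mb{P}^1$ is then holomorphic for the unique complex structure on $S$ making these local models holomorphic, and is of type $(g,\mu,\nu)$ by construction. Finally, a self-homeomorphism $h$ of $S$ satisfies $f\circ h=f$ exactly when $h$ preserves $G$ together with all labellings (face-labels because $h$ permutes preimages of $0,\infty$ trivially; vertex-labels because $h$ lies over $\id_{\mb{P}^1}$), so $\mathrm{Aut}(f)=\mathrm{Aut}(G)$.

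\textbf{Main obstacle.} The delicate step is the inverse construction: one must show that axioms (iii), (v) and (vi) uniquely and consistently determine how the face-wise maps are glued across each edge. Concretely, the cyclic labels on the two sides of an edge have to match after reversing orientation, which is forced by the counterclockwise/clockwise convention in axiom (v), and the identification of the two boundary arcs must be the unique orientation-reversing one — no further cocycle need be checked because disks are simply connected. Once this matching is in place, the local pictures at $2$-valent and $4$-valent vertices glue without ambiguity, and the remaining bookkeeping — that the reconstructed cover realises $\mu$, $\nu$ and $(2,1,\dots,1)$ over $0$, $\infty$ and the $b$-th roots of unity — is immediate from axioms (iv) and (vi).
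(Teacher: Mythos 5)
Your proposal is correct and takes essentially the approach the paper relies on: the paper does not prove this theorem but cites \cite{OPhurwitz}, and the construction it sketches just before \cref{def:hurwitzgalaxy} (pull back $\Gamma_b$ along $f$ and read off \cref{def:hurwitzgalaxy} as a characterisation of the resulting graphs) is exactly your bijection, including the matching of automorphism groups. One sentence should be repaired: the restriction $S\setminus G\to\mathbb{P}^1\setminus\Gamma_b$ is \emph{not} unramified, since the two open disks contain the branch points $0$ and $\infty$ (if it were, every face would have perimeter $1$); you should instead say it is a covering branched only over $0$ and $\infty$, whose components are still open disks by the local model $z\mapsto z^{\mu_i}$ that you invoke in the very next clause, so the argument goes through unchanged.
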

 
\begin{remark}
Most Hurwitz galaxies do not have automorphisms, in fact the only automorphisms appear for $\ell(\mu)+\ell(\nu)=2$.
\end{remark}

There is a further notion of expressing Hurwitz numbers in terms of weighted graph counts. The notion involved are so-called \textit{branching graphs}. The transition from Hurwitz galaxies to branching graphs is done as follows (see also Figure 3 in \cite{Hahnpruned}):

\begin{enumerate}
\item We start with a Hurwitz galaxy of type $(g,\ell(\mu),\ell(\nu))$.
\item We draw a vertex for each white face and label it by same integer as the face.
\item For each new vertex labelled $i$, we draw edges connecting the vertex $i$ to the vertices in the boundary of the corresponding white face. We label the edge connecting $i$ to a vertex labelled $k\in[b]$ by $k$ and thus obtain $\mu_i\cdot b$ edges at $i$ cyclically labelled by $1,\dots, b$.
\item We now remove the vertices and edges of the old Hurwitz galaxy.
\item We obtain a new graph on $[\ell(\mu)]$ many vertices. We obtain a half-edge for each $2-$valent vertex. As each $4-$valent vertex is adjacent to two edges in our construction, we obtain one edge for each $4-$valent vertex by removing the vertex in the Hurwitz galaxy.
\item The graph we obtain is \textit{branching graph of type} $(g,\mu,\nu)$.
\end{enumerate}

This process is in fact a bijection (Proposition 9 \cite{Hahnpruned}), in the sense that each Hurwitz galaxy yields a unique branching graph and vice versa and their automorphism groups are isomorphic. The following is a characterisation of branching graphs.
 \begin{definition}
\label{def:branchinggraph}
 Let $d$ be a positive integers, $\mu$ and $\nu$ be ordered partitions of $d$. We define a branching graph of type $(g,\mu,\nu)$ to be a graph $\varGamma$ on an oriented surface $S$ of genus g, such that for $b=\ell(\mu)+\ell(\nu)-2+2g$:
 \begin{enumerate}[(i)]
 \item $S\backslash\varGamma$ is a disjoint union of open disks.
  \item There are $\ell(\mu)$ vertices, labeled $1,\dots,\ell(\mu)$, such that the vertex labeled $i$ is adjacent to $\mu_i\cdot m$ half-edges, labeled cyclically counterclockwise by $1,\dots,m$. We define the perimeter of the vertex labeled $i$ by $per(i)=\mu_i$.
  \item There are exactly $m$ full edges labeled by $1,\dots,b$.
  \item The $\ell(\nu)$ faces are labeled by $1,\dots,\ell(\nu)$ and the face labeled $i$ has perimeter $per(i)=\nu_i$, by which we mean, that each label occurs $\nu_i$ times inside the corresponding face, where we count full-edges adjacent to $i$ on both sides twice.
 \end{enumerate}
Note, that we allow loops at the vertices.  An isomorphism of branching graphs is a homeomorphism of the underlying surfaces which restricts to a graph isomorphism respecting all labellings.\par 
For a fixed branching graph $\varGamma$, we obtain its \textit{underlyind reduced branching graph} by removing all half-edges.
\end{definition}

The above discussion can be summarised in the following corollary.

\begin{corollary}
Let $g,\mu,\nu$ be data as before, then
\begin{equation}
h_g(\mu,\nu)=\sum \frac{1}{|\mathrm{Aut}(\varGamma)|},
\end{equation} 
where we sum over all equivalence classes of branching graphs of type $(g,\mu,\nu)$.
\end{corollary}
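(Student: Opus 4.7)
The plan is to deduce the corollary directly from the preceding theorem of Okounkov--Pandharipande by transporting its identity along the bijection, sketched in steps (1)--(6) and proved in Proposition 9 of \cite{Hahnpruned}, between Hurwitz galaxies and branching graphs of type $(g,\mu,\nu)$.

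First I would make the forward map $\Phi$ precise: given a Hurwitz galaxy $G$ on a surface $S$ of genus $g$, insert a new vertex $v_i$ in the interior of each white face labelled $i$, connect it to every boundary vertex of that face by an arc lying inside the face, label each such arc by the label of its boundary endpoint, and then delete $G$. The key combinatorial observation is that each $2$-valent vertex of $G$ becomes the single endpoint of a half-edge of $\Phi(G)$, while each $4$-valent vertex fuses two arcs into a single full edge labelled by the corresponding element of $[b]$. I would then verify the items of \cref{def:branchinggraph}: (i)--(iii) are almost immediate from \cref{def:hurwitzgalaxy}(i)--(iii),(v),(vi), whereas (iv) requires showing that the label multiplicities on the boundary of each black face of $G$ transfer correctly to the face perimeter of the corresponding face of $\Phi(G)$, taking into account that labels of full edges incident to a face on both sides must be counted twice.

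For the inverse $\Psi$ I would inflate each vertex $v_i$ of a branching graph $\varGamma$ into a small disk (the future white face labelled $i$), reinsert a $2$-valent vertex at the endpoint of each incident half-edge and a $4$-valent vertex at the midpoint of each full edge, and $2$-colour the resulting faces so that the inflated disks are white. The cyclic labelling of half-edges required by \cref{def:branchinggraph}(ii) together with the perimeter condition (iv) supply the labels demanded by \cref{def:hurwitzgalaxy}, and a direct check gives $\Phi\circ\Psi=\id$ and $\Psi\circ\Phi=\id$. Since both maps are defined intrinsically from the graph-on-surface data and are equivariant with respect to label-preserving homeomorphisms of $S$, they induce a canonical isomorphism $\mathrm{Aut}(G)\cong\mathrm{Aut}(\Phi(G))$, and substituting into the theorem yields the stated formula.

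The main obstacle I anticipate is the bookkeeping in (iv): one must match the multiset of labels appearing along the boundary of a black face of $G$ with the multiset counted with the convention of \cref{def:branchinggraph}(iv), where full edges adjacent to a face on both sides contribute twice. Once this verification and the equivariance statement yielding the automorphism isomorphism are in place, the deduction of the corollary from the theorem is entirely formal.
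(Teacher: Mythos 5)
Your proposal is correct and follows essentially the same route as the paper, which obtains the corollary by combining the Okounkov--Pandharipande count over Hurwitz galaxies with the automorphism-preserving bijection to branching graphs (Proposition 9 of \cite{Hahnpruned}); you merely spell out the details of that bijection which the paper delegates to the cited reference.
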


\begin{remark}
We note, that branching graphs can be obtained by pulling back a graph on $\mathbb{P}^1$ similar to the situation with Hurwitz galaxies. The graph $G_b$ we pull back is given by vertices at $0$ and the $b-th$ roots of unity and the edges connect $0$ with each $b-$th root of unity in a straight path. For a fixed cover $f:S\to\mathbb{P}^1$ of type $(g,\mu,\nu)$, we consider $f^{-1}(G_b)$, and draw a vertex at each of the preimages of $0$. We further label vertices and faces according to the labels of the preimages of $0$ and $\infty$ respectively. We obtain a branching graph of type $(g,\mu,\nu)$ on $S$. By observing that branching graphs of type $(g,\mu,\nu)$ are in bijection to the equivalence classes of covers of type $(g,\mu,\nu)$ and the automorphism group of the branching graph is isomorphic to the automorphism group of the cover one obtains the above corollary without the detour via Hurwitz galaxies.
\end{remark}

We finish this section with the definition of pruned double Hurwitz numbers in \cite{DNpruned,Hahnpruned}.

\begin{definition}
Let $g$ be a non-negative integer and let $\mu,\nu$ be partitions of the same integer. Then we define \textit{pruned double Hurwitz numbers} by
\begin{equation}
\mathcal{PH}_g(\mu,\nu)=\sum_{\varGamma}\frac{1}{|\mathrm{Aut}(\varGamma)|},
\end{equation}
where we sum over all branching graphs of type $(g,\mu,\nu)$ without leaves.
\end{definition}

\section{The bi-pruning correspondence}
\label{sec:main}
We begin by defining the underlying bi-pruned structure of a Hurwitz galaxy.

\begin{definition}
We call a face of a Hurwitz galaxy a \textit{loop face} if it is adjacent to at most one $4-$valent vertex. We call a Hurwitz galaxy without loop faces a \textit{bi-pruned Hurwitz galaxy}.\par 
Let $\mu,\nu$ be partitions of the same positive integer and $S$ a surface of genus $g$. Then we define the \textit{bi-pruned double Hurwitz numbers} by
\begin{equation}
Ph_g(\mu,\nu)=\sum\frac{1}{|\mathrm{Aut}(G)|},
\end{equation}
where we sum over all bi-pruned Hurwitz galaxies of type $(g,\mu,\nu)$. We also define
\begin{equation}
\widehat{Ph}_g(\mu,\nu)=\sum 1,
\end{equation}
where we sum over all bi-pruned Hurwitz galaxies of type $(g,\mu,\nu)$.\par
\end{definition}

\begin{remark}
As there are no automorphisms for $\ell(\mu)+\ell(\nu))\neq2$, we have
\begin{equation}
Ph_g(\mu,\nu)=\widehat{Ph}_g(\mu,\nu)
\end{equation}
in those cases. Thus, we can disregard automorphisms for most of our discussion.
\end{remark}

In our next step, we understand the notion of loop faces in terms of branching graphs.

\begin{proposition}
\label{prop:bibranch}
Let $G$ be a Hurwitz galaxy and let $\Gamma$ be the branching graph corresponding to $G$.
\begin{enumerate}
\item Let $F$ be a black face of $G$ and let $\tilde{F}$ be the face in $\Gamma$ corresponding to $F$. Then $F$ is a loop face if and only if $\tilde{F}$ is a face bounded by a single edge, which is a loop.
\item Let $F$ be a white face of $G$ and let $v_F$ be the vertex in $\Gamma$ corresponding to $F$. Then $F$ is a loop face if and only if $v_F$ is a leaf.
\end{enumerate}
\end{proposition}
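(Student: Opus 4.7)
The plan is to exploit the explicit bijection between Hurwitz galaxies and branching graphs recalled in \cref{sec:pre}. Under this dictionary, a $4$-valent vertex $u$ of $G$ whose two white corners lie in white faces $W_a$ and $W_b$ produces a full edge of $\Gamma$ joining $v_{W_a}$ and $v_{W_b}$ (a loop if $W_a = W_b$), whereas a $2$-valent vertex produces a half-edge at the vertex corresponding to its unique white corner. The central technical input is the following stretch observation: along a maximal stretch of consecutive $2$-valent vertices on the boundary of any face of $G$, the face on the opposite side is constant, because a $2$-valent vertex has only one corner of each colour and therefore cannot switch the identity of the neighbouring face.

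For part~(2), fix a white face $F = W_i$. By the dictionary, each $4$-valent vertex on $\partial F$ contributes exactly one full edge incident to $v_F$, and every full edge at $v_F$ arises in this way. Hence $F$ being a loop face --- adjacent to at most one $4$-valent vertex, and necessarily exactly one since $v_F$ cannot be isolated without contradicting the disk condition of \cref{def:branchinggraph} --- translates directly into $v_F$ being incident to exactly one full edge, i.e.\ $v_F$ is a leaf.

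For part~(1), fix a black face $F$ with corresponding face $\tilde F$ of $\Gamma$. The full edges on the reduced boundary of $\tilde F$ are precisely those coming from the $4$-valent vertices on $\partial F$; the half-edges produced by the $2$-valent vertices of $\partial F$ sit in the interior of $\tilde F$. Suppose $F$ is a loop face with a unique $4$-valent vertex $u$ on $\partial F$. Applying the stretch observation to the arc of $\partial F$ joining the two edges of $u$ that flank the black corner of $F$ at $u$ forces the two white corners of $u$ to lie in a common white face $W$; consequently the full edge at $u$ is a loop at $v_W$ and constitutes the entire reduced boundary of $\tilde F$. Conversely, if $\tilde F$ is bounded by a single loop edge, then $\partial F$ contains exactly one $4$-valent vertex, so $F$ is a loop face. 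The main subtlety is the corner-tracking step at $u$: one must identify the edges of $\partial F$ flanking its black corner at $u$ and verify via the stretch observation that the opposing white faces --- which are precisely the white faces at the two white corners of $u$ --- coincide.
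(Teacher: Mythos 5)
Your dictionary ($4$-valent vertex $\mapsto$ full edge joining the vertices of its two white corners, $2$-valent vertex $\mapsto$ half-edge, with the half-edges of $\partial F$ landing in the interior of $\tilde F$ and the full edges of $\partial F$ forming its boundary) and your stretch observation are both correct, and together they give an actual combinatorial argument where the paper's own proof only points at the genus-$0$ figures and asserts that they are representative. In that sense your route is the rigorous version of the intended one.

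There is, however, one case you do not resolve, and it is exactly the one you flag as ``the main subtlety'': a single $4$-valent vertex $u$ can meet $F$ in \emph{two} corners of the relevant colour, i.e.\ appear twice on the boundary walk of $F$. In part (2), the step ``adjacent to exactly one $4$-valent vertex $\Rightarrow$ $v_F$ is incident to exactly one full edge $\Rightarrow$ $v_F$ is a leaf'' fails there: if both white corners of $u$ lie in $F$, the unique full edge is a loop at $v_F$, which is then $2$-valent. This is not hypothetical --- it is the galaxy of type $(0,(2),(1,1))$, whose branching graph the paper itself describes as a single vertex with a loop and declares \emph{not} bi-pruned, i.e.\ the white face is not meant to be a loop face even though only one $4$-valent vertex exists. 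In part (1) the failure is symmetric: with both black corners of $u$ in $F$ there is no single arc ``joining the two edges of $u$ that flank the black corner'', the stretch observation applied to the two separate arcs no longer forces the two white corners of $u$ into a common white face, and for the galaxy of type $(0,(1,1),(2))$ the face $\tilde F$ is indeed bounded on both sides by a single \emph{non-loop} edge. So under a literal reading of ``adjacent to at most one $4$-valent vertex'' the statement is false and no proof can close; the reading the paper's later examples force is adjacency counted with corner multiplicity. With that convention your argument is complete --- the doubled-corner case is excluded by hypothesis, the boundary walk of $\tilde F$ is a closed walk consisting of one edge-side and hence a loop, which is what your stretch argument shows --- but you must say this explicitly: as written, the corner-tracking step silently assumes $u$ occurs only once on $\partial F$, and that is a genuine gap.
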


\begin{proof}
The first statement is illustrated in \cref{fig:loopblack} and the second statement is illustrated in \cref{fig:loopwhite}. We see in \cref{fig:loopblack} that the black loop face in the Hurwitz galaxy indeed yields a face in the branching graph, which is only bounded by a single edge, which is a loop. Similarly, we see in \cref{fig:loopwhite} that the white loop face in the Hurwitz galaxy yields a leaf in the branching graph.  As the interior of each face is isomorphic to the unit circle, these illustrations in genus $0$ are representative of the general case.
\end{proof}

We obtain the following corollary.

\begin{corollary}
Let $g$ be a non-negative integer and $\mu,\nu$ partitions of the same positive integer, then
\begin{equation}
\mathcal{PH}_g(\mu,\nu)=\sum \frac{1}{|\mathrm{Aut}(G)|},
\end{equation}
where we sum over all Hurwitz galaxies of type $(g,\mu,\nu)$ without white loop faces.
\end{corollary}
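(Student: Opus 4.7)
The plan is to deduce the corollary directly from the bijective correspondence between Hurwitz galaxies and branching graphs, together with \cref{prop:bibranch}(2). Concretely, recall from the discussion preceding \cref{def:branchinggraph} that the construction sending a Hurwitz galaxy $G$ of type $(g,\mu,\nu)$ to a branching graph $\varGamma$ of the same type is a bijection on isomorphism classes, and moreover $|\mathrm{Aut}(G)|=|\mathrm{Aut}(\varGamma)|$. So I would first invoke this bijection to rewrite
\[
\sum_{G}\frac{1}{|\mathrm{Aut}(G)|}=\sum_{\varGamma}\frac{1}{|\mathrm{Aut}(\varGamma)|},
\]
where $G$ ranges over the Hurwitz galaxies described in the statement and $\varGamma$ ranges over their images under the bijection.

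The next step is to identify the image. By \cref{prop:bibranch}(2), a white face $F$ of $G$ is a loop face if and only if the corresponding vertex $v_F$ in $\varGamma$ is a leaf. Hence $G$ has no white loop faces if and only if $\varGamma$ has no leaves. Consequently, the sum on the right hand side is exactly a sum over branching graphs of type $(g,\mu,\nu)$ without leaves, which by definition equals $\mathcal{PH}_g(\mu,\nu)$. This closes the argument.

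There is essentially no technical obstacle: everything reduces to a careful bookkeeping of what has already been established, namely the bijectivity (and automorphism preservation) of the Hurwitz galaxy $\leftrightarrow$ branching graph correspondence and part (2) of \cref{prop:bibranch}. The only subtlety worth flagging explicitly in the write-up is that one must observe that white loop faces are the only feature being tracked, so that the presence or absence of black loop faces on the Hurwitz galaxy side plays no role for this particular identity. This is automatic since the bijection and \cref{prop:bibranch}(2) pair white loop faces with leaves independently of the black face structure, and thus the equality follows.
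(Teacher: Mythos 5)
Your argument is exactly the one the paper intends: the corollary is stated without proof precisely because it follows immediately from the galaxy--branching-graph bijection (with its automorphism-group isomorphism) combined with \cref{prop:bibranch}(2), which is what you do. The proposal is correct and matches the paper's (implicit) reasoning.
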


\begin{figure}
\begin{center}
\scalebox{0.8}{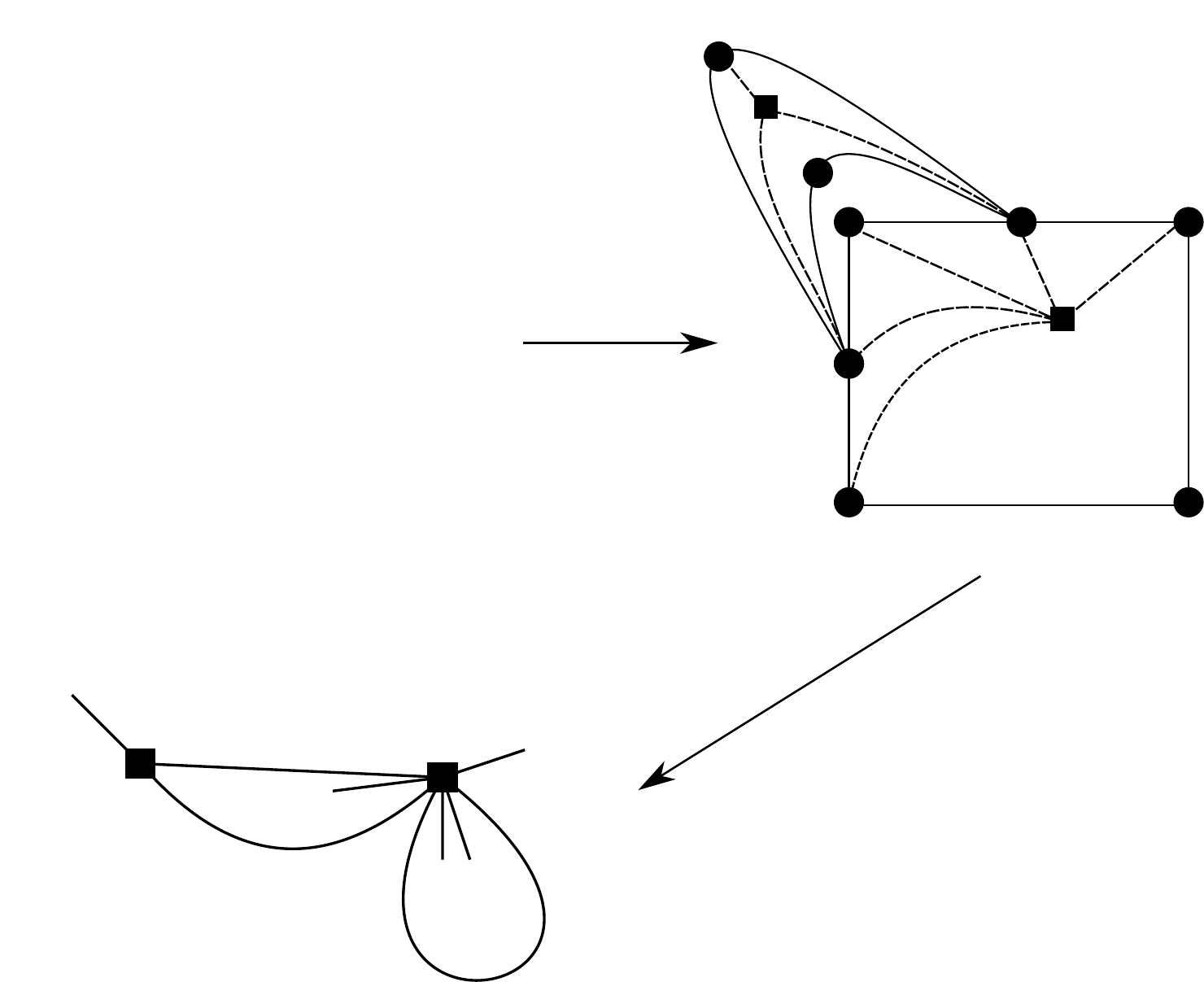}
\caption{We start with a Hurwitz galaxy of type $(0,(3,1),(2,1,1))$, where white faces are indicated by circled face labels and black faces are indicated by squared face labels (top left). Then we draw a vertex for each white face and connect each to the vertices in the boundary of the respective face (top right). Finally, we delete the graph structure of the Hurwitz galaxy and obtain a branching graph of type $(0,(3,1),(2,1,1))$ (bottem left).}
\label{fig:loopblack}
\end{center}
\end{figure}

\begin{figure}
\begin{center}
\scalebox{0.8}{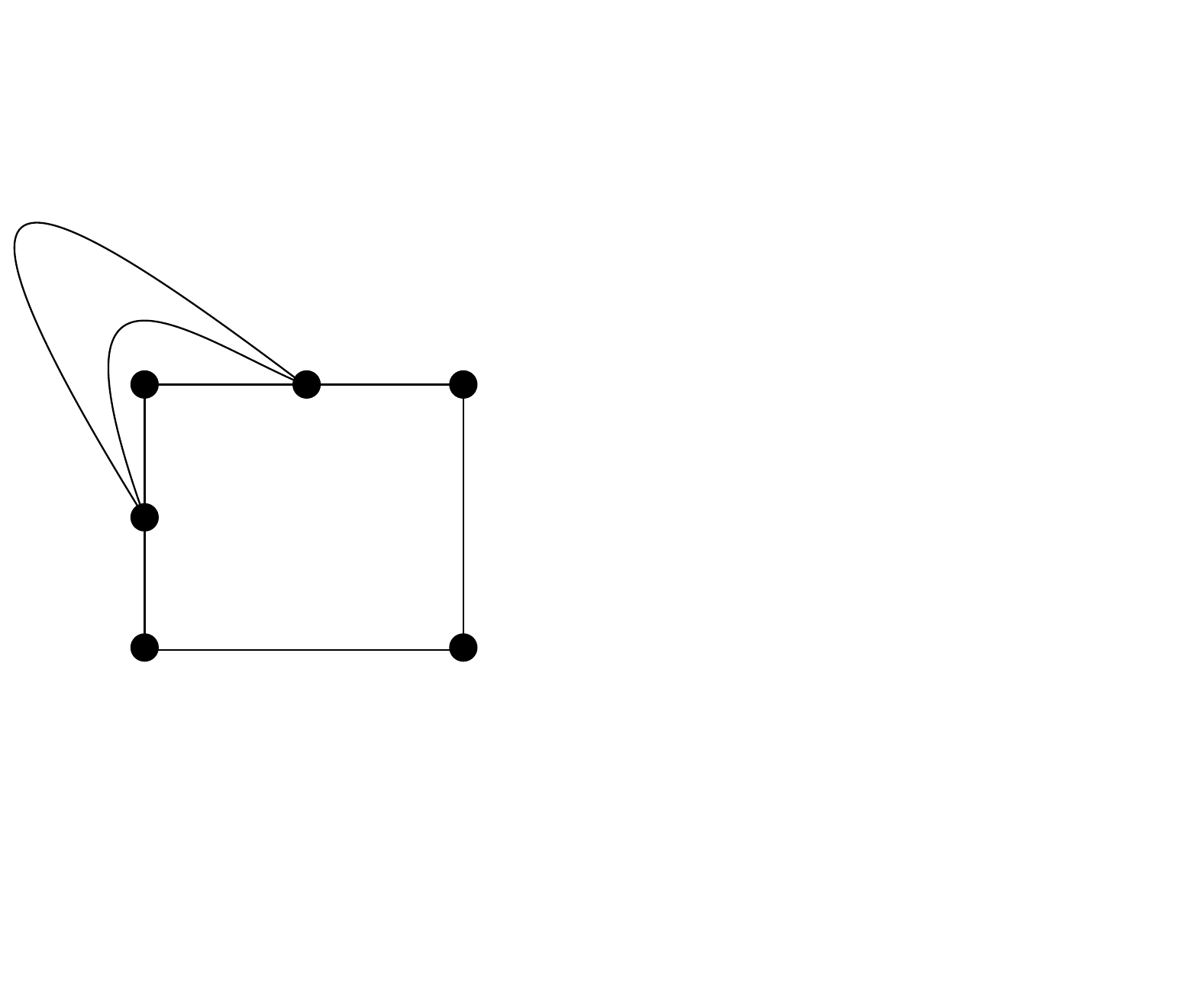}
\caption{We note that in this figure, we reverse the orientation in order to illustrate the process parallel to the one in \cref{fig:loopblack}. We start with a Hurwitz galaxy of type $(0,(2,1,1),(3,1))$, where white faces are indicated by circled face labels and black faces are indicated by squared face labels (top left). Then we draw a vertex for each white face and connect each to the vertices in the boundary of the respective face (top right). Finally, we delete the graph structure of the Hurwitz galaxy and obtain a branching graph of type $(0,(2,1,1),(3,1))$ (bottem left).}
\label{fig:loopwhite}
\end{center}
\end{figure}

We now compute a few examples comparing double Hurwitz number, pruned double Hurwitz numbers and bi-pruned double Hurwitz numbers. Most of our computation were at least supported by GAP \cite{GAP}, as \cref{prop:sym} enables computations of Hurwitz numbers in terms of the symmetric group.

\begin{example}
\label{ex:comp}
We fix $g=0,\mu=(2,2),\nu=(3,1)$. We can compute $h_g(\mu,\nu)$ and $Ph_g(\mu,\nu)$ using the above interpretation in the symmetric group. We obtain
\begin{equation}
h_g(\mu,\nu)=12\quad\textrm{and}\quad Ph_g(\mu,\nu)=2.
\end{equation}
In comparison, for the pruned double Hurwitz numbers in \cite{Hahnpruned}, we obtain $\mathcal{PH}_g(\mu,\nu)=2$.\par
For $g=0,\mu=(2,1),\nu=(1,1,1)$, we obtain
\begin{equation}
h_g(\mu,\nu)=24\quad\textrm{and}\quad Ph_g(\mu,\nu)=6.
\end{equation}
In comparison, for the pruned double Hurwitz numbers in \cite{Hahnpruned}, we obtain $\mathcal{PH}_g(\mu,\nu)=24$.
\end{example}

We introduce the \textit{bi-pruning} process, where we remove white \textbf{and} black faces consecutively, thus generalising the pruning process in \cite{DNpruned,Hahnpruned}.

\begin{construction}
\label{con:prun}
Let $G$ be a Hurwitz galaxy of type $(g,\mu,\nu)$. We construct a new Hurwitz galaxy $G_{prun}$ of type $(g,\mu',\nu')$, where $\ell(\mu')\le\ell(\mu),\ell(\nu')\le\ell(\nu)$ as follows:

\begin{enumerate}
\item If $G$ does not contain a loop face, then $G=G_{prun}$.
\item If $G$ does contain a loop face, we choose one loop face $F$ labelled $j$.
\item The loop face $F$ is adjacent to one $4-$valent vertex $w$ labelled $i$ for $i\in[b]$.
\item We remove all vertices and edges adjacent to $F$ from $G$ except for the vertex labelled $i$.
\item The previously $4-$valent vertex $w$ labelled $i$ is now $2-$valent. Let this vertex be adjacent to two vertices $v, v'$. Then we remove $w$ and its $2$ adjacent edges and joint the vertices $v,v'$ by a new edge.
\item We adjust the face labelling as follows: If $F'$ is a face of $G$ with the same colour as $F$ with a label smaller than $i$, it does not change. If the label of $F'$ is bigger than $i$, we reduce it by $1$.
\item We adjust the vertex labelling by relabelling the vertices with labels in $[b-1]$ but maintaing the linear order the labels.
\item We obtain a graph $G'$.
\item If $G'$ does contain a loop face, we go to step $2$. If $G'$ does not contain a loop face we are done.
\end{enumerate}
We call the resulting graph the \textit{underlying pruned Hurwitz galaxy} $G_{prun}$ of $G$.
\end{construction}

\begin{remark}
Before we continue our discussion, we make three remarks
\begin{itemize}
\item We note that the underlying pruned real Hurwitz galaxy is indeed a Hurwitz galaxy of type $(g,\mu',\nu')$, where $\ell(\mu')\le\ell(\mu),\ell(\nu')\le\ell(\nu)$ and there exists a subpartition $\tilde{\mu}$ (resp. $\tilde{\nu}$) of $\mu$ (resp. $\nu$) of length $\ell(\mu')$ (resp. $\ell(\nu')$), such that all $\mu'\le\tilde{\mu}$ (resp. $\nu'\le\tilde{\nu}$) entrywise.
\item Note, that we may obtain the empty Hurwitz galaxy by removing the entire graph structure.
\item The pruning process in \cref{con:prun} differs from the one in \cite{DNpruned,Hahnpruned} in the sense that in the latter only white loop faces are pruned. The advantage of the approach in \cref{con:prun} is that we may obtain much smaller graphs, as black faces might become loop faces after pruning white loop faces and vice versa (see \cref{ex:prun}).
\end{itemize}
\end{remark}

\begin{example}
\label{ex:prun}
We illustrate \cref{con:prun} on a branching graph of type $(0,(1,6,1,1),(3,2,1,3)$ in \cref{fig:ex}. After the first step, we obtain a Hurwitz galaxy of type $(0,(6,1,1),(2,2,1,3))$. After the second step, we obtain a Hurwitz galaxy of type $(0,(5,1,1),(2,2,3))$. Finally, after the third step we obtain a bi-pruned Hurwitz galaxy of type $(0,(3,1,1),(2,3))$.

\begin{figure}
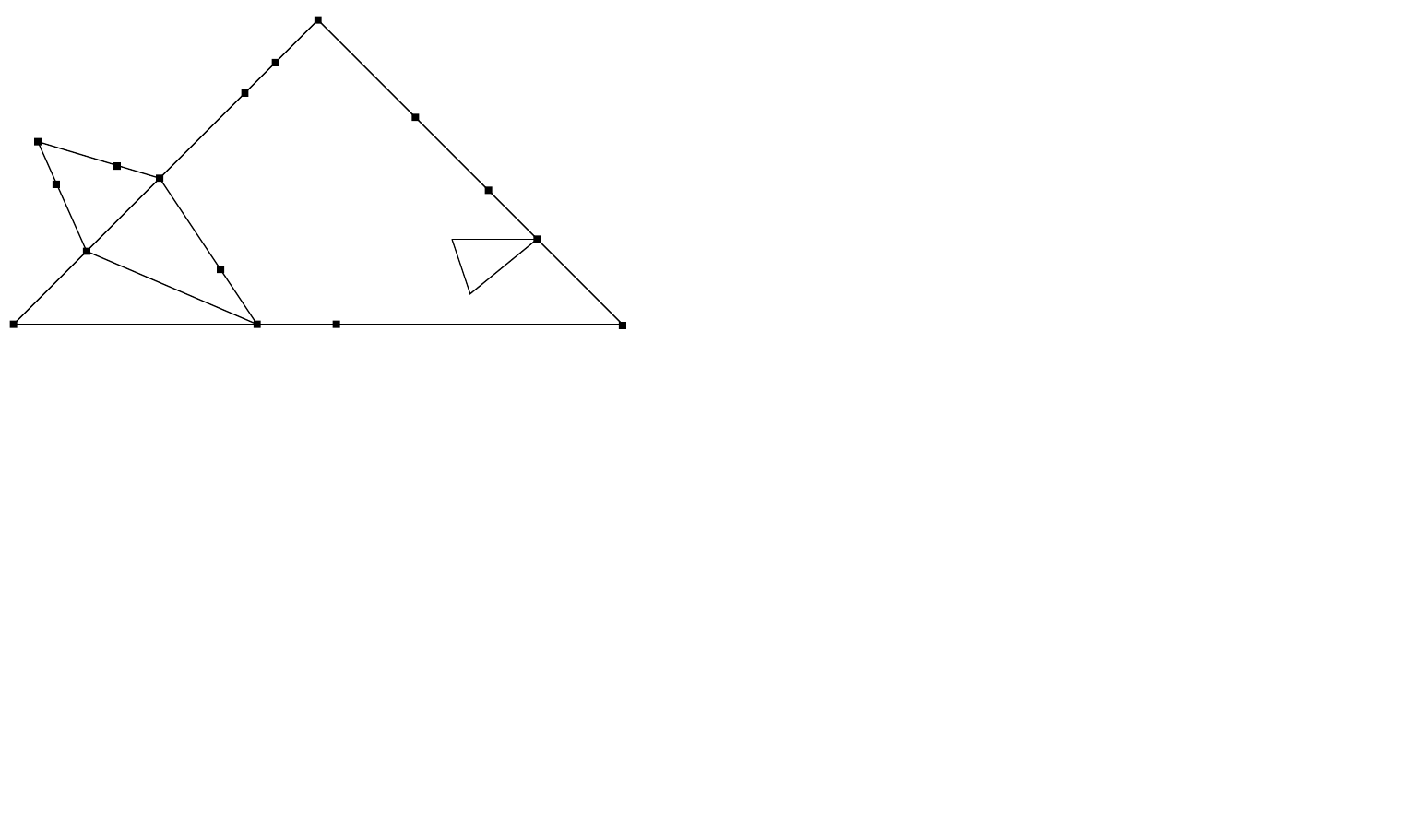
\caption{The application of \cref{con:prun} to a Hurwitz galaxy of type $(0,(1,6,1,1),(3,2,1,3))$. The white faces are indicated by circled face labellings and the black faces by boxed face labellings.}
\label{fig:ex}
\end{figure}
\end{example}

One of the important subtleties concerning the pruning process is that the order in which the loop faces is removed is not unique while the result of the process is. We now define relations between tuples of partitions reflecting the bi-pruning process.

\begin{definition}
Let $\mu,\mu'$ be two ordered partitions and $I\subset[\ell(\mu)]$. We write $\mu'\preceq_I\mu$ if $\ell(\mu')\le\ell(\mu)$, $|I|=\ell(\mu')$ and $\mu'\le\mu_I$ entrywise.\par
Moreover, for partitions $\mu',\nu',\mu,\nu$ we write $(\mu',\nu')\preceq_{I,J}(\mu,\nu)$, if $\mu'\preceq_I\mu,\nu'\preceq_J\nu$ for $I\subset[\ell(\mu)],J\subset[\ell(\nu)]$ and $|\mu'|=|\nu'|$.
\end{definition}

The goal of this section is formulate \cref{thm:main1}, which essentially states that double Hurwitz numbers are determined by bi-pruned double Hurwitz numbers. More precisely, we state a formula expressing double Hurwitz numbers as a weighted sum over bi-pruned double Hurwitz numbers with smaller input data. We now explain the structure of our proof of \ref{thm:main1} in \cref{sec:proof}, in order to motivate the definitions needed in order to state the theorem.
\begin{enumerate}
\item We observe that the statement in \cref{thm:main1} is a weighted bijection between $h_g(\mu,\nu)$ and $Ph_g(\mu',\nu')$ for all $(\mu',\nu')\preceq_{I,J}(\mu,\nu)$.
\item The idea of the proof is to show that there is a weighted bijection between all Hurwitz galaxies of type $(g,\mu,\nu)$ and bi-pruned Hurwitz galaxies of type $(g,\mu',\nu')$ for $(\mu',\nu')\preceq_{I,J}(\mu,\nu)$ as before, i.e. we map each Hurwitz galaxy $G$ of type $(g,\mu,\nu)$ to a tuple $(G',I,J)$, where $G'$ is a unique bi-pruned Hurwitz galaxy of type $(g,\mu',\nu')$ for $(\mu',\nu')\preceq_{I,J}(\mu,\nu)$ and the cardinality of the pre-image of each bi-pruned Hurwitz galaxy only depends on the data $(\mu',\nu')\preceq_{I,J}(\mu,\nu)$.
\item The map is given by the bi-pruning process, i.e. to each Hurwitz galaxy $G$, we associate its underlying bi-pruned Hurwitz galaxy $G'$, where $I$ and $J$ capture the labels of the faces of $G'$ in $G$.
\item For fixed $g,\mu,\nu$, we let $\mu',\nu',I,J$, such that $(\mu',\nu')\preceq_{I,J}(\mu,\nu)$ and we fix a bi-pruned Hurwitz galaxy $G'$ of type $(g,\mu',\nu')$. We now analyse the number Hurwitz galaxies of type $(g,\mu,\nu)$ whose underlying bi-pruned Hurwitz galaxy is $G'$.
\item This analysis is done by reversing the bi-pruning process, i.e. we consecutively glue white and black loop faces into $G'$, such that we obtain a Hurwitz galaxy of type $(g,\mu,\nu)$.
\item The key observation in \cref{sec:proof} is that we can focus on the combinatorics on the level of partitions: Let $G'_1$ be a Hurwitz galaxy obtained by gluing a loop face into $G'$. Then $G'_1$ is of type $(g,\tilde{\mu}^1,\tilde{\nu}^1)$. The data $(\tilde{\mu}^1,\tilde{\nu}^1)$ is of course related to the data $\mu,\nu,\mu',\nu',I,J$. This relation is conceptionalised in \cref{def:gluestep} and we say $(\tilde{\mu}^1,\tilde{\nu}^1)$ is obtained from $(\mu',\nu')$ by a \textit{gluing step}. When we distguish the colour of the loop face, we glued into $G'$, we speak of \textit{black gluing step} (for a black loop face) or a \textit{white gluing step} (for a white loop face).
\item Thus by reversing the bi-pruning process, we obtain a sequence of gluing steps, which start from $(\mu',\nu')$ and result in $(\mu,\nu)$. We refer to the conceptionalisation of this idea as a \textit{gluing sequence from $(\mu',\nu')$ to $(\mu,\nu)$ for $I,J$} in \cref{def:seq}.
\item In \cref{sec:proof}, we observe that each gluing sequence $S$ from $(\mu',\nu')$ to $\mu,\nu$  for $I,J$ governs several ways of consecutively gluing loop faces into $G'$.
\item This number does not depend on $G'$ but only on $(g,\mu',\nu'),I,J,S$. Moreover, it can be explicitly stated by observing that the ways of gluing loop faces into $G'$ according to $S$ can be expressed in terms of families of forests with fixed numbers of components. This is what the multiplicities in \cref{def:mult} correspond to. 
\end{enumerate}

We now introduce the technical notions touched upon in the previous outline of our proof. Once again, we stress that while technical all definitions correspond to explicit notions related to consecutive gluing processes of loop faces, which appear in the proof of \cref{thm:main1}.

\begin{definition}
\label{def:gluestep}
Let $\mu,\nu,\mu',\nu'$ be partitions such that $(\mu',\nu')\preceq_{I,J}(\mu,\nu)$ for some $I\subset[\ell(\mu)],J\subset[\ell(\nu)]$. We fix $i_1\notin I$, $i_2\in I$, $s\le\mu_{i_1}$, $j_1\notin J$, $j_2\in J$, $t\le\nu_{j_1}$.\par 
We first define two new partitions $\tilde{\mu},\tilde{\nu}$ indexed by $I,J\cup\{j_1\}$ respectively, which correspond to gluing a black face of perimeter $t$ labelled $j_1$ into a white face labeled $i_2$:
\begin{itemize}
\item $\tilde{\mu}_k=\mu'_k$ for $k\neq i_2$,
\item $\tilde{\mu}_{i_2}=\mu'_{i_2}+t$,
\item $\tilde{\nu}_k=\nu'_k$ for $k\neq j_1$
\item $\tilde{\nu}_{j_1}=t$.
\end{itemize}
We say $(\tilde{\mu},\tilde{\nu})$ is obtained from $(\mu',\nu')$ by a  \textit{black gluing step of type} $(\mu',\nu',\mu,\nu,I,J,i_2,j_1,t)^{\bullet}$.\par
Similarly, we define two new partitions $\tilde{\mu},\tilde{\nu}$ indexed by $I\cup \{i_1\},J$ respectively, which correspond to gluing a white face of perimter $s$ labelled $j_2$ into a white face labeled $i_1$:
\begin{itemize}
\item $\tilde{\mu}_k=\mu'_k$ for $k\neq i_1$,
\item $\tilde{\mu}_{i_1}=s$,
\item $\tilde{\nu}_k=\nu'_k$ for $k\neq j_2$
\item $\tilde{\nu}_{j_2}=\nu'_{j_2}+s$.
\end{itemize}
We say $(\tilde{\mu},\tilde{\nu})$ is obtained from $(\mu',\nu')$ by a  \textit{white gluing step of type} $(\mu',\nu',\mu,\nu,I,J,i_1,j_2,s)^{\circ}$.
\end{definition}

We now consider sequences of gluings between two tuples of partitions $(\mu',\nu')$ and $(\mu,\nu)$.

\begin{definition}
\label{def:seq}
Let $\mu,\nu,\mu',\nu'$ be partitions, such that $\mu'\preceq_I\mu,\nu'\preceq_J\nu$ for some $I\subset[\ell(\mu)],J\subset[\ell(\nu)]$. We define a gluing sequence from $(\mu',\nu')$ to $(\mu,\nu)$ to be a  sequence of tuples of partitions $((\mu^0,\nu^0),\dots,(\mu^d,\nu^d))$ and gluing steps $(\tau_1,\dots,\tau_d)$, where $\tau_i=(\mu^i,\nu^i,\mu,\nu,I_i,J_i,k_i,l_i,s_i)^{\circ}$ or $\tau_i=(\mu^i,\nu^i,\mu,\nu,I_i,J_i,k_i,l_i,s_i)^{\bullet}$, such that
\begin{enumerate}
\item $\mu^0=\mu',\nu^0=\nu'$ and $\mu^d=\mu,\nu^d=\nu$,
\item $(\mu^{i+1},\nu^{i+1})$ is obtained from $(\mu^i,\nu^{i})$ by $\tau_{i+1}$,
\item $I_1=I,J_1=J$ and $I_d=[\ell(\mu)],J_d=[\ell(\nu)]$,
\item $I_{i+1}=I_i\cup\{k_i\}$ and $J_{i+1}=J_i\cup\{l_i\}$.
\end{enumerate}
For a gluing sequence $S$, we define its combinatorial type to be the sets $C_S^{\circ}\coloneqq\{(k_i,l_i,s_i)^{\circ}\}$, $C_S^{\bullet}\coloneqq\{(k_i,l_i,s_i)^{\bullet}\}$. We call two gluing sequences $S$ and $S'$ equivalent if $C_{S}^{\circ}=C_{S'}^{\circ}$ and $C_S^{\bullet}=C_{S'}^{\bullet}$. Moreover, we denote the set of equivalence classes of gluing sequences from $(\mu',\nu')$ and $(\mu,\nu)$ with respect to $I,J$ as above by $\mathcal{S}_{I,J}((\mu',\nu'),(\mu,\nu))$.
\end{definition}

\begin{remark}
While \cref{def:seq} may seem technical at first, all notions have a direct meaning in terms of gluing loop faces consecutively into Hurwitz galaxies. This becomes clearer in the discussion in\cref{proof1}. Here we give a rough idea: We start with a Hurwitz galaxy $G$ of type $(g,\mu',\nu')$ and consecutively glue loop faces into $G$ to obtain a Hurwitz galaxy of type $(g,\mu,\nu)$. A gluing sequence encodes certain data about this process:
\begin{itemize}
\item Each white (black) gluing step corresponds to gluing a white (resp. black) loop face into a black (resp. white) face.
\item The conditions (1)--(4) correspond to the fact, that each gluing step $\tau_i$ produces a new Hurwitz galaxy $G_i$ with an additional loop face. The gluing step $\tau_{i+1}$ should then correspond to gluing a loop face into $G_i$. Thus, we see that (1)--(4) are compatibility conditions.
\item The sets $C_S^{\circ}\coloneqq\{(k_i,l_i,s_i)^{\circ}\}$ and $C_S^{\bullet}\coloneqq\{(k_i,l_i,s_i)^{\bullet}\}$ collect the data, which white (resp. black) loop faces are glued into which black (resp.) white faces, i.e. they remember the labels of the faces. Moreover, the integer $s_i$ collects the data of the perimeter of the loop face, which is glued into the Hurwitz galaxy.
\end{itemize} 
\end{remark}

We now associate a multiplicity to any given equivalence class of gluing sequences.

\begin{definition}
\label{def:mult}
Let $\mu,\nu,\mu',\nu'$ be partitions such that $\mu'\preceq_I\mu$ and $\nu'\preceq_J\mu$ for some $I\subset[\ell(\mu)]$ and $J\subset[\ell(\nu)]$. Further, let $S\in\mathcal{S}_{I,J}((\mu',\nu'),(\mu,\nu))$. For $i\in [\ell(\mu)]$ ($j\in[\ell(\nu)]$) we denote by $I^S_i$ (resp. $J^S_j$) the subset of indices in $p\in[d]$, such that $\tau_p$ is black gluing step (resp. white gluing step) and $k_p=i$ (resp. $l_p=j$).

We define multiplicities for all $i\in[\ell(\mu)]$ by

\begin{align}
\mathrm{mult}^{\bullet}_{S;I,J}(i)=\sum_{\substack{\underline{a}\in\mathbb{Z}^{\mu_i^S+|I^S_i|}:\\
|\underline{a}|=|I^S_i|}}\sum_{k\in [\mu_i^S]}\binom{|I^S_i|-1}{a_1,\dots,a_{k-1},a_k-1,a_{k+1},\dots,a_{\mu_i^S+|I^S_i|}}\prod_{p\in I^S_i}s_p^{a_p}.
\end{align}

and the multiplicities for all $j\in[\ell(\nu)]$ by

\begin{align}
\mathrm{mult}^{\circ}_{S;i,J}(j)=\sum_{\substack{\underline{a}\in\mathbb{Z}^{\nu_j^S+|J^S_j|}:\\
|\underline{a}|=|J^S_j|}}\sum_{k\in [\nu_j^S]}\binom{|J^S_j|-1}{a_1,\dots,a_{k-1},a_k-1,a_{k+1},\dots,a_{\nu_j^S+|J^S_j|}}\prod_{p\in J^S_j}s_p^{a_p}.
\end{align}

Moreover, we define the multiplicity of the gluing sequence $S$ from $(\mu',\nu')$ to $(\mu,\nu)$ with respect to $I$ as
\begin{equation}
\mathrm{mult}_{I,J}(S)=\binom{2g-2+\ell(\mu)+\ell(\nu)}{2g-2+\ell(\mu')+\ell(\nu')}\cdot (|I^c|+|J^c|)!\cdot \prod_{i\in\ell(\mu)}\mathrm{mult}_{S}(i)\prod_{j\in\ell(\nu)}\mathrm{mult}_{S}(j).
\end{equation}
\end{definition}

\begin{theorem}
\label{thm:main1}
Let $\mu,\nu$ be partitions of the positive integer and let $g$ be a non-negative integer, such that $\ell(\mu)+\ell(\nu)\neq2$. Then, we have
\begin{align}
\label{equ:main}
h_g(\mu,\nu)=&\sum_{\substack{I\subset[\ell(\mu)]\\J\subset[\ell(\nu)]}}\sum_{\substack{\mu'\preceq_I\mu\\\nu'\preceq_J\nu}}\widehat{Ph}_g(\mu',\nu')\sum_{S\in\mathcal{S}_{I,J}((\mu',\nu'),(\mu,\nu))}\mathrm{mult}_{I,J}(S)\\
+&\left(\sum_{j\in[\ell(\nu)]}\sum_{a=1}^{\mathrm{min}(\mu_1,\nu_j)}\sum_{S\in\mathcal{S}_{\{1\},\{j\}}((a,a),(\mu,\nu))}\mathrm{mult}_{\{1\},\{j\}}(S)\right)\cdot\delta_{g,0},
\end{align}
where $\delta_{g,0}$ is the Kronecker symbol.
\end{theorem}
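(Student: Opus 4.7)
The plan is to set up a weighted bijection between the set of all Hurwitz galaxies of type $(g,\mu,\nu)$ and the disjoint union over $(\mu',\nu') \preceq_{I,J} (\mu,\nu)$ of pairs $(G', \text{gluing data})$, where $G'$ is a bi-pruned Hurwitz galaxy of type $(g,\mu',\nu')$ and the gluing data records how to reconstruct the original galaxy. The forward map is the bi-pruning process of \cref{con:prun}: given a Hurwitz galaxy $G$, iteratively strip loop faces until no loop face remains, and record $I \subset [\ell(\mu)]$ and $J \subset [\ell(\nu)]$ as the sets of original labels of the white and black faces that survive. A preliminary step is to check that the resulting bi-pruned galaxy is independent of the order in which loop faces are removed; this is a local commutativity argument, since the removal of a loop face only affects faces adjacent to the single $4$-valent vertex it carries.

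The core of the proof will be the inverse direction: fixing a bi-pruned Hurwitz galaxy $G'$ of type $(g,\mu',\nu')$, count the number of Hurwitz galaxies of type $(g,\mu,\nu)$ that bi-prune to $G'$. I would do this by reversing the pruning one loop face at a time. Each reverse step is encoded abstractly by a gluing step of \cref{def:gluestep}, and a complete unpruning corresponds to a gluing sequence in $\mathcal{S}_{I,J}((\mu',\nu'),(\mu,\nu))$. Two sequences give the same final galaxy whenever they have the same combinatorial type, which is exactly the equivalence relation under which $\mathcal{S}_{I,J}$ is defined. Thus the preimage count factors as a sum over equivalence classes $S$ of (number of unprunings of type $S$ inserted into $G'$). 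The key observation is that this last count depends only on $(g,\mu',\nu'), I, J$ and $S$, not on $G'$ itself, because each insertion of a loop face is determined by a choice of a vertex label in $[b']$ (with $b' = 2g-2+\ell(\mu')+\ell(\nu')$ at the moment of insertion) and of an attaching position inside the target face of $G'$; the local picture of insertion is identical for every galaxy.

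Next I would isolate the combinatorics of how, within a fixed equivalence class $S$, several loop faces of a given colour can accumulate against the \emph{same} face of $G'$. Loop faces attached directly to $G'$ may themselves be carriers for further loop faces, producing a rooted tree whose root is a vertex of the bi-pruned target face and whose internal structure is prescribed by the triples $(k_p,l_p,s_p)$ in $C^\circ_S$ or $C^\bullet_S$. The number of such rooted-forest decorations with prescribed multiset of perimeters $\{s_p\}$ attached to the $\mu'_i$ vertices of a white target face labelled $i$ is a standard Cayley/Prüfer-type count, and one checks that it evaluates precisely to $\mathrm{mult}^{\bullet}_{S;I,J}(i)$ of \cref{def:mult}; analogously for $\mathrm{mult}^{\circ}_{S;I,J}(j)$ on the black side. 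The binomial factor $\binom{2g-2+\ell(\mu)+\ell(\nu)}{2g-2+\ell(\mu')+\ell(\nu')}$ accounts for the choice of which of the $b$ vertex labels in the final galaxy correspond to the surviving edges of $G'$ (equivalently, which of the $|I^c|+|J^c|$ extra $4$-valent vertices are inserted at which positions in the cyclic labelling), and the factor $(|I^c|+|J^c|)!$ orders the insertions consistently with the cyclic labelling conventions of \cref{def:hurwitzgalaxy}.

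Combining these counts proves the first sum in \eqref{equ:main}. The main obstacle I anticipate is verifying the forest count, and in particular checking that the multinomial-and-product expression in \cref{def:mult} matches the generating function for rooted forests with prescribed perimeter weights: the subtle points are the distinction between the ``root vertex of $G'$'' and interior attachment points, and the interaction between the cyclic label of each $4$-valent vertex and the edge-labelling inherited from $\Gamma_b$. Finally I would handle the exceptional correction: by the remark after \cref{def:hurwitzgalaxy}, the only galaxies with nontrivial automorphisms occur for $\ell(\mu)+\ell(\nu)=2$, i.e.\ $g=0$, $\mu=(a)$, $\nu=(a)$; a bi-pruned galaxy of this type is a single $2$-gon, and its unique automorphism forces the $\delta_{g,0}$ correction term, which collects all contributions from gluing sequences starting at the two-face seed $((a),(a))$ for each admissible $a$ and each black face label $j$ of the final galaxy. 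Verifying that these edge cases fit the same forest-counting framework (with the one automorphism absorbed into $Ph_g$ versus $\widehat{Ph}_g$) will complete the argument.
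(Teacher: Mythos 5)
Your overall strategy---bi-pruning as the forward map, reversal via gluing sequences grouped into equivalence classes, and a rooted-forest count (with prescribed perimeter weights, a generalized-Cayley valency count, the binomial factor for which vertex labels survive, and the $(|I^c|+|J^c|)!$ for the remaining labels)---is exactly the route the paper takes, and those parts of your plan are sound.

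The genuine gap is your justification of the $\delta_{g,0}$ correction term. You attribute it to the nontrivial automorphism of galaxies with $\ell(\mu)+\ell(\nu)=2$ and you call the type-$(0,(a),(a))$ galaxy ``a bi-pruned galaxy of this type''; both claims are wrong, and the argument built on them would not produce the correct term. A Hurwitz galaxy of type $(0,(a),(a))$ has $b=2g-2+\ell(\mu)+\ell(\nu)=0$, so each of its two faces is adjacent to at most one (indeed, zero) $4$-valent vertices: both faces are loop faces, the galaxy is \emph{not} bi-pruned, and running \cref{con:prun} on it---or on any galaxy that prunes down to it---terminates in the \emph{empty} galaxy. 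Consequently $\widehat{Ph}_0((a),(a))=0$, and every Hurwitz galaxy of type $(0,\mu,\nu)$ whose bi-pruning is empty is simply invisible to the first double sum in \eqref{equ:main}; the correction term exists to add these galaxies back, not to compensate for an automorphism. The paper's argument is: such galaxies occur only for $g=0$ (for $g\ge1$ one has $2g-2+\ell(\mu)+\ell(\nu)\ge2$, so at least two $4$-valent vertices always survive and the pruned galaxy is nonempty); for them one stops the pruning at the last non-empty stage, which after normalising so that the white face labelled $1$ is removed last is the \emph{unique} galaxy $G_a$ of type $(0,(a),(a))$ whose black face carries some label $j$ and whose perimeter satisfies $a\le\min(\mu_1,\nu_j)$; one then counts gluing sequences from $((a),(a))$ to $(\mu,\nu)$ exactly as in the main term, with the prefactor $\widehat{Ph}$ replaced by $1$ because $G_a$ is unique. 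You would need to replace your automorphism discussion with this ``empty bi-pruning'' analysis; the hypothesis $\ell(\mu)+\ell(\nu)\neq2$ on the \emph{target} type is what removes automorphisms from the picture entirely, and they play no further role in the proof.
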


\begin{remark}
We note that the system of equations given in \cref{equ:main} is of lower-triangular shape with respect to $\mu$ and $\nu$. Thus the families of numbers $h_g(\mu,\nu)$ and $\widehat{Ph}_g(\mu,\nu)$ determine each other.
\end{remark}

The next example illustrates \cref{thm:main1}.

\begin{example}
\label{ex:comp2}
As in \ref{ex:comp}, we fix $g=0,\mu=(2,1),\nu=(1,1,1)$. We first observe all tuples of partitions, such that $(\mu',\nu')\preceq_{I,J}(\mu,\nu)$ for some $I\subset[2],J\subset[3]$:
\begin{enumerate}
\item $((1),(1))\preceq_{I,J}(\mu,\nu)\quad\mathrm{for}\quad I=\{1\},\{2\},J=\{1\},\{2\},\{3\}$,
\item $((2),(1,1))\preceq_{I,J}(\mu,\nu)\quad\mathrm{for}\quad I=\{1\},J=\{1,2\},\{2,3\},\{1,3\}$,
\item $((1,1),(1,1)\preceq_{I,J}(\mu,\nu)\quad\mathrm{for}\quad I=[2],J=\{1,2\},\{2,3\},\{1,3\}$,
\item $((2,1),(1,1,1)\preceq_{I,J}(\mu,\nu)\quad\mathrm{for}\quad I=[2],J=[3]$.
\end{enumerate}
We see that only for (3) and (4) the set $\mathcal{S}_{I,J}((\mu',\nu'),(\mu,\nu))$ is non-empty. For (4), the empty gluing sequence is the only one, as any bi-pruned Hurwitz galaxy of type $(0,(2,1),(1,1,1))$ is a also a Hurwitz galaxy of type $(0,(2,1),(1,1,1))$. Thus, the contribution in case (4), is the summand
\begin{equation}
Ph_0((2,1),(1,1,1))\cdot 1=6.
\end{equation}
Now, we consider case (3). We fix $I=[2]$ and $J=[2]$. We see that the only gluing sequence in $\mathcal{S}_{I,J}(((1,1),(1,1)),((2,1),(1,1,1)))$ is given by $S=(((1,1),(1,1),(2,1),(1,1,1),[2],[2],1,3,1))^{\bullet}$. Moreover, the multiplicity of each face of $1$ and we obtain
\begin{equation}
\mathrm{mult}_{I,J}(S)=\binom{3}{2}\cdot 1\cdot 1\cdot 1=3.
\end{equation}
We proceed similarly for $J=\{1,3\}$ and $J=\{2,3\}$ and obtain the same result for $\mathrm{mult}_{I,J}(S)$. This yields a summand
\begin{equation}
Ph_0((1,1),(1,1))\cdot(3+3+3)=2\cdot(3+3+3)=18.
\end{equation}
The term
\begin{equation}
\left(\sum_{j\in[\ell(\nu)]}\sum_{a=1}^{\mathrm{min}(\mu_1,\nu_j)}\sum_{S\in\mathcal{S}_{\{1\},\{j\}}((a,a),(\mu,\nu))}\mathrm{mult}_{\{1\},\{j\}}(S)\right)\cdot\delta_{g,0}
\end{equation}
yields a summand of $0$ in this case, as $\mathcal{S}_{\{1\},\{j\}}(a,a)$ is empty for any choice of $j$ and $a$.
This yields
\begin{equation}
h_0((2,1),(1,1,1))=Ph_0((2,1),(1,1,1))\cdot 1+Ph_0((1,1),(1,1))\cdot 9=6+18=24,
\end{equation}
which coincides with our computation in \cref{ex:comp}.
\end{example}

\section{Structural observations}
\label{sec:struc}
In this section, we make two observations about the structure of bi-pruned double Hurwitz numbers, which mirror the structure of double Hurwitz numbers.\par
We fix the configuration space of partitions of the same size with fixed lengths $W=\{(\mu,\nu)\in\mathbb{N}^m\times\mathbb{N}^n\mid\sum_{i=1}^m\mu_i=\sum_{j=1}^n\nu_j\}$. Then we can view double Hurwitz numbers as a map
\begin{align}
h_g:W&\to\mathbb{Q}\\
(\mu,\nu)&\mapsto h_g(\mu,\nu).
\end{align}

The family $(\sum_{i\in I}\mu_i=\sum_{j\in J}\nu_j)_{I\in[m],J\in[n]}$ yields a hyperplane arrangement inside $W$. We consider its complement and call the connected components \textit{chambers}. It was proved in \cite{GJVdouble} that for any chamber $C$, there exists a polynomial $p_{g;m,n}^C$ in $m+n$ variables of degree $4g-3+m+n$, such that
\begin{equation}
h_{g}(\mu,\nu)=p_{g;m,n}^C(\mu,\nu)
\end{equation}
for all $(\mu,\nu)\in C$. For pruned Hurwitz numbers, we obtain the following analogous result.

\begin{proposition}
\label{prop:poly}
Let $m,n$ be positive integers and $g$ a non-negative integer. Let $W=\{(\mu,\nu)\in\mathbb{N}^m\times\mathbb{N}^n\mid\sum_{i=1}^m\mu_i=\sum_{j=1}^n\nu_j\}$. We view bi-pruned double Hurwitz numbers as a function
\begin{align}
Ph_g:W&\to\mathbb{Q}\\
(\mu,\nu)&\mapsto Ph_g(\mu,\nu).
\end{align}
Let $C$ be a connected component of the complement of the hyperplane arrangement $(\sum_{i\in I}\mu_i=\sum_{j\in J}\nu_j)_{I\in[m],J\in[n]}$ in $W$. Then there exists a polynomial $P_{g;m,n}^C$ of degree at most $4g-3+m+n$ in $m+n$ variables, such that
\begin{equation}
Ph_g(\mu,\nu)=P_{g;m,n}^C(\mu,\nu)
\end{equation}
for all $(\mu,\nu)\in C$.
\end{proposition}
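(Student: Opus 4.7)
My plan is to invert Theorem~\ref{thm:main1}, combine the inverted relation with the piecewise polynomial behaviour of $h_g$ established in \cite{GJVdouble}, and then induct on $|\mu|=|\nu|$. The remark following Theorem~\ref{thm:main1} already identifies the system relating $h_g$ and $\widehat{Ph}_g$ as lower-triangular with respect to the refinement order $\preceq_{I,J}$, and inspection of \cref{def:mult} shows that the diagonal entry equals one (the empty gluing sequence contributes multiplicity $\binom{2g-2+m+n}{2g-2+m+n}\cdot 0! = 1$). Consequently the system inverts uniquely to yield
\begin{equation*}
\widehat{Ph}_g(\mu,\nu) = h_g(\mu,\nu) - \sum_{(I,J)} \sum_{(\mu',\nu')\prec_{I,J}(\mu,\nu)} \widehat{Ph}_g(\mu',\nu')\, \mathcal{M}_{I,J}(\mu,\nu,\mu',\nu') - \delta_{g,0}\, R(\mu,\nu),
\end{equation*}
where $\mathcal{M}_{I,J}(\mu,\nu,\mu',\nu') = \sum_{S \in \mathcal{S}_{I,J}((\mu',\nu'),(\mu,\nu))} \mathrm{mult}_{I,J}(S)$ and $R$ collects the genus-zero correction. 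Note that strict refinement forces $|\mu'|=|\nu'|<|\mu|=|\nu|$, so an induction on $|\mu|$ is well-posed.

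The first step is to verify that $\mathcal{M}_{I,J}(\mu,\nu,\mu',\nu')$ is polynomial in the parts of $(\mu,\nu)$, with total degree at most $(m-\ell(\mu'))+(n-\ell(\nu'))$, for fixed $(I,J,\mu',\nu')$. By inspection of \cref{def:mult}, the outer binomial and the factor $(|I^c|+|J^c|)!$ are constants; the multinomials in $\mathrm{mult}^{\bullet,\circ}_{S;I,J}$ depend only on the combinatorial type of $S$; and the monomials $\prod_p s_p^{a_p}$ contribute total degree $\sum_i |I_i^S|+\sum_j |J_j^S|$, equal to the length $d = (m-\ell(\mu'))+(n-\ell(\nu'))$ of any gluing sequence. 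A finite sum over equivalence classes in $\mathcal{S}_{I,J}((\mu',\nu'),(\mu,\nu))$ preserves these polynomial properties.

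The induction step proceeds as follows. Fix a chamber $C\subset W$ and $(\mu,\nu)\in C$. By \cite{GJVdouble}, $h_g$ is polynomial on $C$ of degree $4g-3+m+n$; by the inductive hypothesis (applied to the smaller configuration spaces), each $\widehat{Ph}_g(\mu',\nu')$ is polynomial on its chamber, of degree at most $4g-3+\ell(\mu')+\ell(\nu')$. Multiplication by $\mathcal{M}_{I,J}$ yields a polynomial of degree at most $[4g-3+\ell(\mu')+\ell(\nu')]+[(m-\ell(\mu'))+(n-\ell(\nu'))]=4g-3+m+n$. The summation over $(\mu',\nu')\preceq_{I,J}(\mu,\nu)$ ranges over lattice points in a polytope with facets linear in $(\mu,\nu)$, and sums of polynomial functions over such polytopes remain polynomial in $(\mu,\nu)$ of the same degree by an Ehrhart-type argument. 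Since the outer index set $(I,J)$ is finite, the full sum remains polynomial, producing $P^C_{g;m,n}$ of the desired degree. As $Ph_g = \widehat{Ph}_g$ whenever $\ell(\mu)+\ell(\nu)\neq 2$, and the remaining case $m=n=1$ reduces to a direct verification on a one-dimensional configuration space, the conclusion follows.

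The main obstacle is the compatibility of chamber structures across the induction: $(\mu,\nu)$ varies in a chamber $C$ of the ambient space $W$, whereas the inductive hypothesis provides polynomiality of $\widehat{Ph}_g(\mu',\nu')$ only on chambers of the smaller configuration space. I resolve this by noting that, for fixed summation data $(I,J,\mu',\nu')$ and gluing parameters $s_p$, the tuple $(\mu',\nu')$ is determined by affine-linear functions of these parameters and the parts of $(\mu,\nu)$, and remains in a single chamber of the smaller arrangement as $(\mu,\nu)$ ranges over $C$. This is because the defining hyperplanes of the smaller arrangement pull back, under the natural affine-linear substitution, to a subfamily of the hyperplanes cutting out $C$; thus a single polynomial describes $\widehat{Ph}_g(\mu',\nu')$ uniformly over the summation, and the proof assembles.
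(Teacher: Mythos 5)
Your route is genuinely different from the paper's. The paper does not invert \cref{thm:main1} at all: it observes (via \cref{prop:bibranch}) that being bi-pruned is a condition purely on the \emph{underlying reduced branching graph} (no leaves, no face bounded by a single loop edge), so $Ph_g(\mu,\nu)=\sum\omega(\Gamma)$ where the sum runs over the \emph{finitely many} admissible reduced branching graphs with $m$ vertices, $n$ faces and $b=2g-2+m+n$ edges, and each $\omega(\Gamma)$ is already known from \cite{GJVdouble} to be piecewise polynomial of degree at most $4g-3+m+n$. That is the entire proof; your inversion-plus-induction scheme is much heavier and, as written, has two genuine gaps.

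First, the degree accounting fails where you invoke an ``Ehrhart-type argument.'' Summing a polynomial of degree $k$ over the lattice points of a polytope whose size grows with $(\mu,\nu)$ does \emph{not} preserve the degree: it raises it by the dimension of the polytope (compare $\sum_{x=1}^{N}x^{k}\sim N^{k+1}/(k+1)$). Your inner sum runs over both the parts of $(\mu',\nu')$ and, implicitly, over the perimeters $s_p$ appearing in the gluing sequences (the set $\mathcal{S}_{I,J}((\mu',\nu'),(\mu,\nu))$ is not of bounded cardinality as $(\mu,\nu)$ grows), subject to affine constraints tying them to the parts of $(\mu,\nu)$. To salvage the bound $4g-3+m+n$ you would need to compute the dimension of this summation polytope for each combinatorial type and show that (degree of summand)$+$(dimension) does not exceed the target; you assert the conclusion without doing this count, and your intermediate claim that $\mathcal{M}_{I,J}$ is itself a polynomial in $(\mu,\nu)$ of degree $(m-\ell(\mu'))+(n-\ell(\nu'))$ is not justified for the same reason. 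Second, the chamber-compatibility paragraph does not work as stated: $(\mu',\nu')$ is a \emph{summation variable} ranging over a polytope, not an affine-linear function of $(\mu,\nu)$, so even for a fixed $(\mu,\nu)\in C$ the points $(\mu',\nu')$ sweep across walls of the smaller hyperplane arrangement. One must decompose the summation region into pieces on which a single polynomial represents $\widehat{Ph}_g(\mu',\nu')$ and control how these pieces vary with $(\mu,\nu)$; your one-sentence pullback argument does not address this. Both issues are repairable in principle (this is essentially the machinery behind wall-crossing formulas for double Hurwitz numbers), but they are the substance of the proof in your approach and cannot be waved through.
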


\begin{proof}
The proof is parallel to the proof of piecewise polynomiality of double Hurwitz numbers in \cite{GJVdouble}, which is in terms of branching graphs. We recall the structure of the proof in \cite{GJVdouble}: Consider all reduced branching graphs on $m$ vertices with $n$ faces and $b=2g-2+m+n$ edges for some $g$. The key observation is that for a fixed reduced branching graph $\Gamma$ with this data, the number $\omega(\Gamma)$ of branching graphs of type $(g,\mu,\nu)$ with $\ell(\mu)=m,\ell(\nu)=n$ is piecewise polynomial in the entries of $\mu$ and $\nu$ of degree at most $4g-3+m+n$.\par 
We have alread observed in \cref{prop:bibranch} that bi-pruned double Hurwitz numbers can be expressed as the sum over all branching graphs without leaves and faces that are bounded by a single edge which is a loop. However, this is in fact a condition on the underlying reduced branching graph. Thus, we can express bi-pruned double Hurwitz numbers as follows
\begin{equation}
Ph_g(\mu,\nu)=\sum \omega(\Gamma),
\end{equation}
where we sum over all reduced branching graphs without leaves and faces that are bounded by a single edge which is a loop. As this is a finite sum and since $\omega(\Gamma)$ behaves piecewise polynomially in $\mu$ and $\nu$ as proved in \cite{GJVdouble}, we obtain our result.
\end{proof}

We now illustrate the polynomial behaviour of bi-pruned double Hurwitz numbers.

\begin{example}
We fix $g=0,m=n=2$. All possible branching graphs for this case are illustrated in Figure 10 in \cite{Hahnpruned}. We see that the only pruned and bi-pruned branching graphs co-incide. Thus, we obtain for $d>a,b>c$
\begin{equation}
h_g((a,b),(c,d))=2d,\ Ph_g((a,b),(c,d))=2c.
\end{equation}
For the pruned double Hurwitz numbers, we obtain $\mathcal{PH}_g((a,b),(c,d))=2c$.\par 
We fix $g=0,m=1,n=2$. Thus, we obtain $b=2g-2+m+n=1$. The only possible reduced branching graph $\Gamma$ is a vertex with a edge, which is a loop, i.e. it is pruned but not bi-pruned. Moreover, for fixed $\mu=a$ and $\nu=(b,c)$, there is only one branching graph, with underlying reduced branching graph $\Gamma$. Thus, we obtain $h_g((a),(b,c))=\mathcal{PH}_g((a),(b,c))=1$ and $Ph_g((a),(b,c))=0$.
\end{example}

In the next step, we interpret bi-pruned double Hurwitz numbers in terms of factorisations in the symmetric group. The original result for the usual double Hurwitz numbers is essentially due to Hurwitz. In order to state it, we introduce the notion of \textit{factorisations of type} $(g,\mu,\nu)$. Let $(\sigma_1,\tau_1,\dots,\tau_b,\sigma_2)$ be a factorisation of type $(g,\mu,\nu)$ if
\begin{enumerate}
\item $\sigma_1,\ \sigma_2,\ \tau_i\in\mathcal{S}_d$, where $d=|\mu|=|\nu|$;
\item $\mathcal{C}(\sigma_1)=\mu_1,\ \mathcal{C}(\sigma_2)=\mu_2\textrm{ and }\mathcal{C}(\tau_i)=(2,1,\dots,1)$;
\item the group generated by $(\sigma_1,\tau_1,\dots,\tau_b,\sigma_2)$ acts transitively on $\{1,\dots,d\}$;
\item the disjoint cycles of $\sigma_1$ and $\sigma_2$ are labelled, such that the cycle of $\sigma_1$ ($\sigma_2)$ labelled $i$ (resp. $j$) is of length $\mu_i$ (resp. $\nu_j$);
\item $\tau_b\cdots\tau_1\cdot\sigma_1=\sigma_2$.
\end{enumerate}
We further denote by $F(g,\mu,\nu)$ the collection of factorisations of type $(g,\mu,\nu)$. Moreover, we call $(\sigma_1,\tau_1,\dots,\tau_b,\sigma_2)$ an \textit{inverted factorisation of type} $(g,\mu,\nu)$ if it satisfies the above conditions (1)--(4) and
\begin{itemize}
\item $\sigma_1\tau_1\cdots\tau_b=\sigma_2$.
\end{itemize}
We denote by $F^{in}(g,\mu,\nu)$ the collection of inverted factorisations of type $(g,\mu,\nu)$.\par 
One can pass from a factorisation of type $(g,\mu,\nu)$ to an inverted factorisation of type $(g,\mu,\nu)$ and back as follows: Let $(\sigma_1,\tau_1,\dots,\tau_b,\sigma_2)$ be a factorisation of type $(g,\mu,\nu)$, we define $\eta_0=\sigma_1$, $\eta_i=\tau_i\cdots\tau_1\sigma_1$ and $\pi_i=\eta_{i-1}^{-1}\eta_i$ for $i=1,\dots,b$. Then $(\sigma_1,\pi_1,\dots,\pi_b,\sigma_2)$ is an inverted factorisation of type $(g,\mu,\nu)$. Similarly, we obtain $\eta_i=\sigma_1\pi_1\cdots\pi_i$ and $\tau_i=\eta_i\eta_{i-1}^{-1}$ for $i=1,\dots,b$.\par 
The following theorem is essentially due to Hurwitz.

\begin{theorem}
Let $g$ be a non-negative integer and $\mu,\nu$ partitions of the same positive integer. Then, we have
\begin{equation}
h_g(\mu,\nu)=\frac{1}{d!}|F(g,\mu,\nu)|=\frac{1}{d!}|F^{in}(g,\mu,\nu)|.
\end{equation}
\end{theorem}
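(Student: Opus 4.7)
The plan is to realize both equalities through the classical monodromy correspondence, decomposing the statement into two parts: a bijection between equivalence classes of Hurwitz covers and $S_d$-orbits of factorisations (yielding the first equality), and an explicit combinatorial bijection $F(g,\mu,\nu)\leftrightarrow F^{in}(g,\mu,\nu)$ (yielding the second). For the former, I would fix a basepoint $x_0\in\mathbb{P}^1$ disjoint from $\{0,\infty,\zeta_1,\ldots,\zeta_b\}$ (with $\zeta_i$ the $b$-th roots of unity) together with a standard generating system $\gamma_0,\gamma_1,\ldots,\gamma_b,\gamma_\infty$ of $\pi_1(\mathbb{P}^1\setminus\{0,\infty,\zeta_1,\ldots,\zeta_b\},x_0)$ satisfying the single relation $\gamma_0\gamma_1\cdots\gamma_b\gamma_\infty=1$. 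Given a Hurwitz cover $f:S\to\mathbb{P}^1$ of type $(g,\mu,\nu)$ together with a bijection $f^{-1}(x_0)\cong\{1,\ldots,d\}$, the monodromies of $\gamma_0$, of the $\gamma_i$, and of $\gamma_\infty^{-1}$ yield permutations $\sigma_1,\tau_1,\ldots,\tau_b,\sigma_2\in S_d$. Transitivity comes from connectedness of $S$; the cycle types of $\sigma_1,\sigma_2,\tau_i$ are forced by the ramification profile; and the labels on the cycles of $\sigma_1$ (respectively $\sigma_2$) are read off from the prescribed labelings of the preimages of $0$ (respectively $\infty$), since each such cycle is in natural bijection with the sheets meeting a given preimage. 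This produces a tuple in $F(g,\mu,\nu)$.

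By the Riemann existence theorem the construction descends to a bijection between isomorphism classes of Hurwitz covers of type $(g,\mu,\nu)$ and $S_d$-orbits of tuples in $F(g,\mu,\nu)$, where $S_d$ acts by simultaneous conjugation (corresponding to relabeling the generic fiber, which preserves the labeled cycle structure of $\sigma_1,\sigma_2$ since conjugation permutes cycles as sets). Under this correspondence, the stabilizer of a tuple $\phi$ is canonically identified with $\mathrm{Aut}(f_\phi)$. Orbit--stabilizer then yields
\begin{equation*}
|F(g,\mu,\nu)|\;=\;\sum_{[f]}\frac{d!}{|\mathrm{Aut}(f)|}\;=\;d!\cdot h_g(\mu,\nu),
\end{equation*}
which is the first equality.

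For the second equality, the recipe $\eta_0=\sigma_1$, $\eta_i=\tau_i\cdots\tau_1\sigma_1$, $\pi_i=\eta_{i-1}^{-1}\eta_i$ supplied in the text defines a map $F(g,\mu,\nu)\to F^{in}(g,\mu,\nu)$. Each $\pi_i=\eta_{i-1}^{-1}\tau_i\eta_{i-1}$ is a conjugate of $\tau_i$ and therefore has cycle type $(2,1,\ldots,1)$; the product $\sigma_1\pi_1\cdots\pi_b$ telescopes to $\eta_b=\sigma_2$; transitivity and the cycle labelings on $\sigma_1,\sigma_2$ are preserved tautologically. The explicit inverse $\tau_i=\eta_i\eta_{i-1}^{-1}$, $\eta_i=\sigma_1\pi_1\cdots\pi_i$ shows the map is a bijection, so $|F(g,\mu,\nu)|=|F^{in}(g,\mu,\nu)|$ and the theorem follows.

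The main subtlety is bookkeeping of labels: one must check that the cycle labelings prescribed on $\sigma_1,\sigma_2$ match those on the preimages of $0$ and $\infty$ coherently across the monodromy construction, and that they are preserved by the $S_d$-action. Since conjugation acts on cycles as sets, the labels transport unambiguously, and this is the only delicate point; the remainder of the argument is classical.
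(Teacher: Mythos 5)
The paper gives no proof of this theorem, citing it as classical (``essentially due to Hurwitz''), and your argument is exactly the standard one: monodromy plus the Riemann existence theorem identifies isomorphism classes of labelled covers with $S_d$-conjugation orbits of labelled factorisations, orbit–stabilizer converts the orbit count into $\frac{1}{d!}|F(g,\mu,\nu)|$, and the telescoping substitution $\pi_i=\eta_{i-1}^{-1}\eta_i$ (which is the very recipe the paper records just before the theorem statement) gives the bijection $F(g,\mu,\nu)\leftrightarrow F^{in}(g,\mu,\nu)$. Your proof is correct; the only point left implicit is the orientation convention for the monodromy representation, which determines whether the loop relation $\gamma_0\gamma_1\cdots\gamma_b\gamma_\infty=1$ produces the factorisation or the inverted factorisation directly, but since you establish the bijection between the two sets this is immaterial.
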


In addition to it being a fascinating observation, this theorem enables a feasible method of computing double Hurwitz numbers. Our next goal is to derive an analogous result for the bi-pruned case.\par 
We begin by discussing an algorithm, which associates tuples of factorisations to a Hurwitz galaxy due to Johnson. We note, that instead of extracting the factorisation $(\sigma_1,\tau_1,\dots,\tau_b,\sigma_2)$ from the Hurwitz galaxy, we extract $\sigma_1,\tau_1\sigma_1,\tau_2\tau_1\sigma_1,\dots,\tau_{b-1}\cdots\tau_1\sigma_1,\tau_b\cdots\tau_1\sigma_1(=\sigma_2)$.

\begin{algorithm}[\cite{Johnsontropicalization}]
\label{alg:sym}
Let $g$ be a non-negative integer, $\mu$ and $\nu$ partitions of the same positive integer $d$ and $G$ a Hurwitz galaxy of type $(g,\mu,\nu)$.
\begin{enumerate}
\item There are $d$ many edges connecting a vertex labeled $1$ and a vertex labeled $b$. We assign a marking in $[d]$ to each such edge, such that all edges are marked distinctively. The arbitrary character of the marking corresponds to the fact that the permutations are only determined up to conjugation.
\item To obtain $\eta_0$, we fix a marking $x$, which is adjacent to white face. We proceed counterclockwisely with respect to this white face from $x$ along the border of the white until we reach the next marking $y$. Then $x$ is mapped to $y$. By proceeding accordingly for all marking, we obtain a permutation, which we call $\eta$.
\item In order to obtain $\eta_i$ for $i>0$, we one again fix a marking $x$ and proceed along the same orientation as in step (2). If we reach a marking $y$ before we reach a vertex, then $x$ is mapped to $y$. However, if we reach a vertex before, we reach a marking, we proceed differentl: In step (2), we turned left at each vertex due to the orientation of the border with respect to the white faces. To obtain $\eta_i$, we turn right all vertices labeled $j$ for $j\le i$ and obtain a permutation as before.
\end{enumerate}
This way we obtain permutations $\eta_0,\dots,\eta_b$. We obtain a factorisation $(\sigma_1,\tau_1,\dots,\tau_b,\sigma_2)$ of type $(g,\mu,\nu)$ by 
\begin{itemize}
\item $\sigma_1=\eta_0$,
\item $\tau_i\cdots\tau_1\sigma_1=\eta_i$, i.e. $\tau_i=\eta_i\sigma_1^{-1}\tau_1^{-1}\cdots\tau_{i-1}^{-1}=\eta_i\eta_{i-1}^{-1}$ for $i=1,\dots,b$,
\item $\sigma_2=\tau_b\cdots\tau_1\sigma_1$.
\end{itemize}
Similarely, we obtain an inverted factorisation $(\sigma_1,\pi_1,\dots,\pi_b,\sigma_2)$ of type $(g,\mu,\nu)$ by
\begin{itemize}
\item $\sigma_1=\eta_0$,
\item $\sigma_1\pi_1\cdots\pi_i=\eta_i$, i.e. $\pi_i=\pi_{i-1}^{-1}\cdots\pi_1^{-1}\sigma_1^{-1}\eta_i=\eta_{i-1}^{-1}\eta_i$ for $i=1,\dots,b$,
\item $\sigma_2=\sigma_1\tau_b\cdots\tau_1$.
\end{itemize}
\end{algorithm}

Before, we state the next lemma, we introduce some notation. For a permutation $\sigma\in S_d$, we denote by $\mathrm{supp}(\sigma)$ those elements of $[d]$, which are changed under the natural action of $\sigma$. Moreover, for a labeled permutation $\sigma$, we denote the cycle labeled $i$ by $\sigma^i$.

\begin{lemma}
\label{lem:sym}
Let $G$ be a Hurwitz galaxy with edge marking in $\{1,\dots,d\}$ as in \cref{alg:sym} of type $(g,\mu,\nu)$, let $(\sigma_1,\tau_1,\dots,\tau_b,\sigma_2)$ be the factorisation and $(\sigma_1,\pi_1,\dots,\pi_b,\sigma_2)$ the inverted factorisation associated to $G$.
\begin{enumerate}
\item $G$ contains a white loop face if and only if there exists an $i\in[\ell(\mu)]$, such that there is exactly one $j\in[b]$ with $\mathrm{supp}(\tau_j)\cap\mathrm{supp}(\sigma_1^i)\neq\emptyset$,
\item $G$ contains a black loop face if and only if there exists an $i\in[\ell(\nu)]$, such that there is exactly one $j\in[b]$ with $\mathrm{supp}(\pi_j)\cap\mathrm{supp}(\sigma_2^i)\neq\emptyset$.
\end{enumerate}
\end{lemma}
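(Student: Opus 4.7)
The plan is to interpret each support appearing in the lemma geometrically via \cref{alg:sym}, and then translate "support intersection non-empty" into "$v_j$ adjacent to the given face". By construction of $\sigma_1=\eta_0$, the cycle $\sigma_1^i$ is the cyclic permutation of the $\mu_i$ edge-markings lying on the boundary of the $i$-th white face, so $\mathrm{supp}(\sigma_1^i)$ is precisely this boundary-marking set. Since $\sigma_2=\eta_b$ is the endpoint of the same procedure after turning right at every $4$-valent vertex, which amounts to counterclockwise traversal of black face boundaries, $\mathrm{supp}(\sigma_2^i)$ is analogously the boundary-marking set of the $i$-th black face.

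The core geometric step is to pin down $\mathrm{supp}(\tau_j)$. Since $\tau_j=\eta_j\eta_{j-1}^{-1}$ differs from the identity only by the local routing switch at the single $4$-valent vertex $v_j$, a local analysis at $v_j$ shows that $\tau_j$ is the single transposition $(y_1,y_2)$, where $y_k$ is the first marking reached after $v_j$ along the $\eta_{j-1}$-traversal through the $k$-th white corner of $v_j$. The key point is that after the local swap at $v_j$, the strand originally continuing along the $k$-th white face reroutes into the other white corner and arrives precisely at $y_{3-k}$, so the two rerouted paths close up into a single transposition rather than two disjoint ones. For $\pi_j$ we use the symmetric dual procedure: starting from $\eta_b=\sigma_2$ and progressively flipping "turn right" back to "turn left" in reverse order of vertex labels produces the same sequence $(\eta_i)$ traversed in reverse, and in this reversed formulation $\pi_j$ plays the role that $\tau_j$ played in the original. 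By exactly the analogous local analysis, $\mathrm{supp}(\pi_j)$ consists of two markings, one on each of the two black corners of $v_j$.

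Combining these identifications, $\mathrm{supp}(\tau_j)\cap\mathrm{supp}(\sigma_1^i)\neq\emptyset$ iff at least one white corner of $v_j$ belongs to the $i$-th white face, i.e.\ iff $v_j$ is adjacent to the $i$-th white face; similarly $\mathrm{supp}(\pi_j)\cap\mathrm{supp}(\sigma_2^i)\neq\emptyset$ iff $v_j$ is adjacent to the $i$-th black face. Hence the cardinalities in the lemma count exactly the $4$-valent vertices adjacent to the given face. By definition, a face is a loop face iff it is adjacent to at most one $4$-valent vertex; in the non-degenerate setting $b\geq 1$, every face must be adjacent to at least one $4$-valent vertex, since otherwise its boundary would be a cycle of $2$-valent vertices separating it from a single opposite-coloured face, forcing the whole Hurwitz galaxy to coincide with that cycle and contradicting $b\geq 1$. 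Thus "at most one" coincides with "exactly one", and both statements follow.

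The main obstacle is the local analysis at $v_j$: verifying that $\tau_j$ is a single transposition of the claimed form rather than a product of two disjoint transpositions, which requires tracking how the two strands through $v_j$ recombine after the routing swap. This is ultimately a reflection of the monodromy picture, in which looping around a simple branch point lifts to a single transposition of two sheets, and the same reasoning transported to the black side yields the analogous statement for $\pi_j$.
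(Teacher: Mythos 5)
Your proposal is correct and follows essentially the same route as the paper: both arguments identify $\mathrm{supp}(\sigma_1^i)$ (resp.\ $\mathrm{supp}(\sigma_2^i)$) with the markings on the boundary of the corresponding white (resp.\ black) face, and show by a local analysis at the $4$-valent vertex labelled $j$ that $\tau_j$ (resp.\ $\pi_j$) is the transposition of the two markings reached from its two white (resp.\ black) corners, so that the number of indices with non-empty support intersection counts the $4$-valent vertices adjacent to the face. Your additional remark resolving the discrepancy between ``at most one'' in the definition of loop face and ``exactly one'' in the lemma is a point the paper's proof leaves implicit.
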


\begin{proof}
This is seen immediatly by extracting the transpositions $\tau_j$ and $\pi_k$ from the Hurwitz galaxies (see \cref{ex:sym} for an illustration of this procedure). In order to obtain $\tau_j$ from $G$, we consider the $4-$valent vertex labeled $j$. This vertex labeled $j$ is adjacent to two white face $i_1,i_2$ (not necessarily distinct). Moreover, the $4-$valent vertex labeled $j$ is incident to two edges $e_1$ and $e_2$ which are adjacent to a vertex labeled $j+1$. Of those edges $e_1,e_2$, one is adjacent to the white face labeled $i_1$ and one is adjacent to white face labeled $i_2$. By convention let $e_1$ be adjacent to $i_1$ and $e_2$ adjacent to $i_2$. Then we proceed from the $4-$valent vertex labeled $j$ counterclockwisely with respect to $i_1$ along $e_1$ until we reach a marking $x$. Similarely, we proceed from the $4-$valent vertex labeled $j$ counterlockwisely with respect to $i_2$ along $e_2$ until we reach a marking $y$. Then we obtain $\tau_j=(x\ y)$.\par 
In order to obtain $\pi_k$, we proceed similarely. We consider the $4-$valent vertex labeled $k$, which is adjacent to two black faces labeled $i_1,i_2$. Then the $4-$valent vertex labeled $k$ is incident to two edges $e_1,e_2$, which are adjacent to a vertex labeled $k-1$. As before, let $e_1$ be adjacent to the black face labeled $i_1$ and $e_2$ adjacent to the black face labeled $i_2$. We proceed from the $4-$valent vertex labeled $k$ counterclockwisely with respect to $i_1$ ($i_2$) along $e_1$ (resp. $e_2$) until we reach a marking $x$ (resp. $y$). Then, we obtain $\pi_k=(x\ y)$.\par 
Thus, if $\tau_i=(r_i\ s_i)$ ($\pi_i=(r_i\ s_i)$) corresponds to a vertex adjacent to the white faces (resp. black faces) $i_1,i_2$, then by construction, $r_i$ is contained in the support of the cycle corresponding to $i_1$ and $s_i$ is contained in the support of the cycle corresponding to $i_2$. This completes the proof of the lemma.
\end{proof}

This motivates the following definition.

\begin{definition}
Let $(\sigma_1,\tau_1,\dots,\tau_b,\sigma_2)$ be a factorisation of type $(g,\mu,\nu)$ and let $(\sigma_1,\pi_1,\dots,\pi_b,\sigma_2)$ be the associated inverted factorisation of type $(g,\mu,\nu)$. We call $(\sigma_1,\tau_1,\dots,\tau_b,\sigma_2)$ a \textit{bi-pruned factorisation of type} $(g,\mu,\nu)$ if
\begin{itemize}
\item for all $i\in[\ell(\mu)]$ there exist $j,k\in[b]$, $j\neq k$ with $\mathrm{supp}(\tau_j)\cap\mathrm{supp}(\sigma_1^i)\neq\emptyset\neq\mathrm{supp}(\tau_k)\cap\mathrm{supp}(\sigma_1^i)$ and
\item for all $i\in[\ell(\nu)]$ there exist $j,k\in[b]$, $j\neq k$ with $\mathrm{supp}(\pi_j)\cap\mathrm{supp}(\sigma_2^i)\neq\emptyset\neq\mathrm{supp}(\pi_k)\cap\mathrm{supp}(\sigma_2^i)$.
\end{itemize}
We denote by $F^{bi}(g,\mu,\nu)$ the set of all bi-pruned factorisations of type $(g,\mu,\nu)$.
\end{definition}

The above discussion is summarised in the following result.

\begin{proposition}
\label{prop:sym}
Let $\mu,\nu$ be two ordered partitions of the same positive integer and $g$ a non-negative integer. Then
\begin{equation}
Ph_g(\mu,\nu)=\frac{1}{d!}|F^{bi}(g,\mu,\nu)|.
\end{equation}
\qed
\end{proposition}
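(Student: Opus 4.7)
The plan is to bootstrap from the classical Hurwitz identity $h_g(\mu,\nu)=\tfrac{1}{d!}|F(g,\mu,\nu)|$. The key observation is that \cref{alg:sym}, together with the freedom to choose the edge marking in step (1), furnishes a surjection
\[
\{(G,\text{marking}) : G \text{ a Hurwitz galaxy of type }(g,\mu,\nu)\} \twoheadrightarrow F(g,\mu,\nu),
\]
whose fibres have size $|\mathrm{Aut}(G)|$, so that each Hurwitz galaxy $G$ contributes exactly $d!/|\mathrm{Aut}(G)|$ distinct factorisations as the marking varies over all $d!$ labellings of the selected edges. This is the standard mechanism underlying the classical Hurwitz count, and it immediately yields the stated formula $h_g(\mu,\nu)=\tfrac{1}{d!}|F(g,\mu,\nu)|$.

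Next, I would invoke \cref{lem:sym}. Its first clause says that $G$ contains a white loop face if and only if there exists $i\in[\ell(\mu)]$ with exactly one $j\in[b]$ satisfying $\mathrm{supp}(\tau_j)\cap\mathrm{supp}(\sigma_1^i)\neq\emptyset$, and its second clause is the analogous statement for black loop faces phrased in terms of the inverted factorisation $(\sigma_1,\pi_1,\dots,\pi_b,\sigma_2)$. Taking contrapositives of both clauses simultaneously, $G$ is a bi-pruned Hurwitz galaxy if and only if the associated (inverted) factorisation lies in $F^{bi}(g,\mu,\nu)$. Hence the correspondence above restricts to a surjection
\[
\{(G,\text{marking}) : G \text{ a bi-pruned Hurwitz galaxy of type }(g,\mu,\nu)\} \twoheadrightarrow F^{bi}(g,\mu,\nu),
\]
still with fibres of size $|\mathrm{Aut}(G)|$.

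Summing over isomorphism classes of bi-pruned Hurwitz galaxies then gives
\[
|F^{bi}(g,\mu,\nu)| \;=\; \sum_{G \text{ bi-pruned}} \frac{d!}{|\mathrm{Aut}(G)|} \;=\; d!\cdot Ph_g(\mu,\nu),
\]
which rearranges to the claimed identity. The only delicate point is to keep straight the interaction between the factorisation $(\sigma_1,\tau_1,\dots,\tau_b,\sigma_2)$ and its inverted partner $(\sigma_1,\pi_1,\dots,\pi_b,\sigma_2)$, since the two conditions defining $F^{bi}$ are phrased in terms of different sequences. This is harmless: the explicit conversions $\pi_i=\eta_{i-1}^{-1}\eta_i$ and $\tau_i=\eta_i\eta_{i-1}^{-1}$ recorded in \cref{alg:sym} guarantee that both clauses of \cref{lem:sym} apply verbatim to the same galaxy $G$, and the two characterisations may be combined without further work.
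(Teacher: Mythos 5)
Your proposal is correct and follows essentially the same route as the paper, which states the proposition as a summary of the preceding discussion (Johnson's \cref{alg:sym} converting marked galaxies into factorisations, with the $d!$ accounting for markings, combined with \cref{lem:sym} to identify bi-pruned galaxies with bi-pruned factorisations). Your write-up simply makes explicit the fibre-counting argument that the paper leaves implicit, including the correct handling of the two clauses phrased via $\tau_j$ and $\pi_j$ respectively.
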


%

We illustrate the procedure of associating permutations to a Hurwitz galaxy in the following example.

\begin{example}
\label{ex:sym}
\begin{enumerate}
\item We fix the Hurwitz galaxy of type $(0,(3,1),(2,1,1))$ in \cref{fig:exsym} and see $b=2\cdot0-2+\ell((3,1))+\ell((2,1,1))=-2+2+3=3$. We add extra markings $a_1,\dots,a_4$ between the vertices with label $1$ and label $3$ (see \cref{fig:exsym}). We now construct the permutation $\eta_0,\dots,\eta_3$. Proceeding as in \cref{alg:sym}, we obtain
\begin{equation}
\eta_0=(a_1\ a_2\ a_3)(a_4),\quad \eta_1=(a_1\ a_2\ a_3\ a_4),\quad \eta_2=(a_1\ a_3\ a_4)(a_2),\quad \eta_3=(a_1\ a_3)(a_2)(a_4).
\end{equation}
We first compute $\tau_i$ and $\pi_j$ directly from the $\eta_k$:
\begin{align}
&\tau_1=\eta_1\eta_0^{-1}=(a_1\ a_2\ a_3\ a_4)(a_1\ a_3\ a_2)(a_4)=(a_1\  a_4)\\
&\tau_2=\eta_2\eta_1^{-1}=(a_2\ a_3),\ \tau_3=\eta_3\eta_2^{-1}=(a_1\ a_4).
\end{align}
and
\begin{align}
&\pi_1=\eta_0^{-1}\eta_1=(a_1\ a_3\ a_2)(a_1\ a_2\ a_3\ a_4)=(a_3\ a_4)\\
&\pi_2=\eta_1^{-1}\eta_2=(a_1\ a_2),\ \pi_3=\eta_2^{-1}\eta_3=(a_3\ a_4).
\end{align}
We see that this is also reflected in the Hurwitz galaxy, when we the proof of \cref{lem:sym}, where a procedure to extract transpositions from Hurwitz galaxies is derived.\par 
In order to see that the black face labeled $3$ is a loop face we use the result \cref{lem:sym}. Namely, the black face labeled $3$ corresponds to the cycle $(a_2)$, which has support $\{a_2\}$. We further see $\mathrm{supp}(\pi_1)=\{a_3,a_4\}$, $\mathrm{supp}(\pi_2)=\{a_1,a_2\}$, $\mathrm{supp}(\pi_3)=\{a_3,a_4\}$. Thus, the only transpositions which non-empty intersection of the supports is $\pi_2$, i.e. $\mathrm{supp}(\pi_2)\cap\mathrm{supp}((a_2))=\{a_2\}\neq\emptyset$. Similarely, for all other faces, we can find \textbf{two} transpositions which yield non-trivial intersection of the supports (where we consider $\tau_i$ for the white faces and $\pi_j$ for the black faces).
\item We now consider the Hurwitz galaxy of type $(0,(2,1),(2,1))$ in \cref{fig:exsym2} and proceed as before. This yields
\begin{equation}
\eta_0=(a_1\ a_2),\quad \eta_1=(a_1\ a_3\ a_2),\quad \eta_2=(a_2\ a_3).
\end{equation}
This yields
\begin{equation}
\tau_1=(a_2\ a_3),\quad \tau_2=(a_1\ a_3)
\end{equation}
and
\begin{equation}
\pi_1=(a_1\ a_3),\quad \pi_2=(a_1\ a_2).
\end{equation}
By the same arguments as in the first example, this reflects the fact that the Hurwitz galaxy is bi-pruned.
\end{enumerate}
\end{example}

\begin{figure}
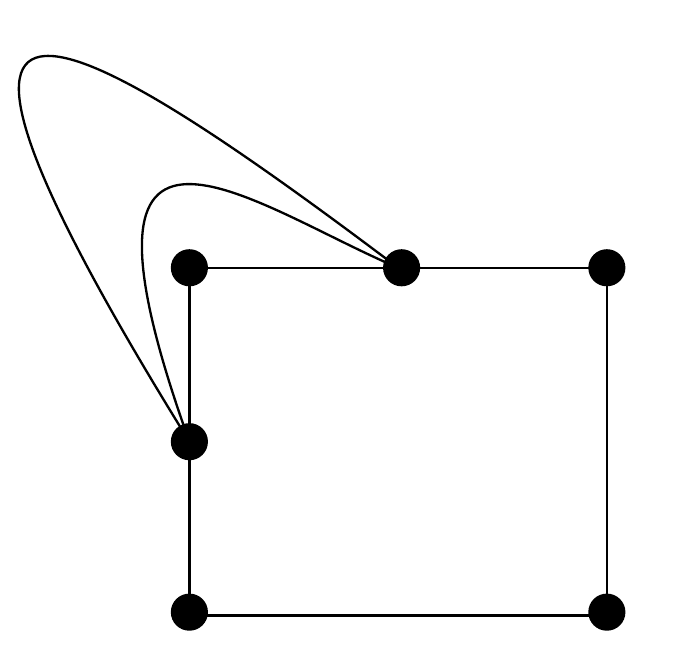
\caption{A Hurwitz galaxy with extra markings between vetices with label $1$ and $3$.}
\label{fig:exsym}
\end{figure}

\begin{figure}
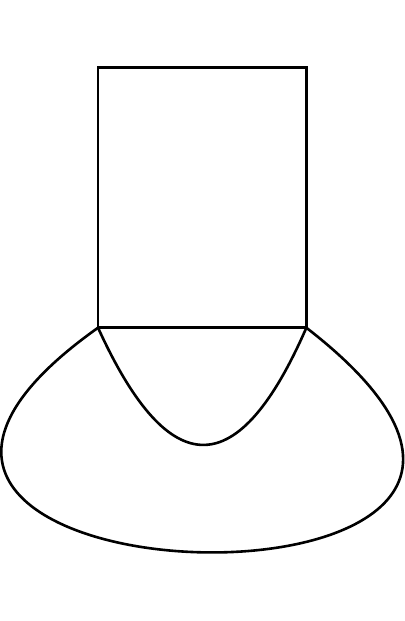
\caption{A Hurwitz galaxy with extra markings between vetices with label $1$ and $2$.}
\label{fig:exsym2}
\end{figure}

We finish this section with a remark on topological recursion.

\begin{remark}
We note that a discussion concerning pruned double Hurwitz numbers in the context of topological recursion appears in \cite{DKpruned}. It would be interesting to understand possible connections between topological recursion and bi-pruned double Hurwitz numbers, as well.
\end{remark}

\section{Proof of \cref{thm:main1}}
\label{sec:proof}
The structure of the proof is as follows: We begin by introducing the idea of associating a family of forests, i.e. (possibly disconnected graphs without cycles) to a  pruned Hurwitz galaxy and a gluing sequence. We then introduce the idea of gluing loop faces into Hurwitz galaxies in terms of data encoded in families of forests (\cref{proof1}). We finish the proof in \cref{proof2} by associating a family of forests and a gluing sequence to the pruning process of a fixed Hurwitz galaxy and relating it to the discussion in \cref{proof1}.\par 

For this analysis, let us define the notion of gluing loop faces into a Hurwitz galaxy $\Gamma$ of type $(g,\mu,\nu)$. Let $F$ be a face of $\Gamma$ of perimeter $b$. Thus, the boundary of $F$ is homeomorphic to a circle with $b\cdot m$ vertices, where $m=2g-2+\ell(\mu)+\ell(\nu)$. We now want to glue a black (white) loop face labelled $j$ for some $j\in[\ell(\nu)+1]$ (resp. $j\in[\ell(\mu)+1]$) of perimeter $a$ into $F$ of white (black) face labelled $i$ for $i\in[\ell(\mu)]$ (resp. $i\in[\ell(\nu)]$).

\begin{construction}
\label{con:glue}
For the above setting, we do the following construction.
\begin{enumerate}
\item We choose an integer $k\in[m+1]$.
\item We enrich $\Gamma$ with vertices by changing the labelling of all vertices with label $k'>k$ to $k'+1$ and adding a vertex labelled $k$ between all vertices labelled $k-1$ and $k+1$. This way obtain a circle with $b\cdot (m+1)$ vertices as the boundary of $F$.
\item We let $F'$ be a circle with $a\cdot(m+1)$ vertices. We choose one vertex labelled $k$ in $F'$ and one vertex labelled $k$ in $F$ and glue them together.
\item We further embed $F'$ into the face $F$ with the opposite orientation on the boundary, i.e. if $F$ is a white with vertices labelled cyclically counterclockwise, $F'$ is black face with vertices labelled cyclically clockwise and vice versa.
\item If $F$ is a white (black) face, we change labels of black (resp. white) faces, such that for all black (resp. white) faces of $\Gamma$ with label $l\ge j$, we increase the label by $1$ and label the black (resp. white) face $F'$ by $j$.
\end{enumerate}
We obtain a new Hurwitz galaxy $\Gamma'$ of type $(g,\tilde{\mu},\tilde{\nu})$, where if $F'$ is a black (white) face, the tuple $(\tilde{\mu},\tilde{\nu})$ is obtained from $(\mu,\nu)$ by the black (resp. white) gluing step $(\mu,\nu,\tilde{\mu},\tilde{\nu},[\ell(\mu)+1],[\ell(\nu)+1]\backslash\{j\},i,j,a)^{\bullet}$ (resp. $(\mu,\nu,\tilde{\mu},\tilde{\nu},[\ell(\mu)+1]\backslash\{j\},[\ell(\nu)+1],i,j,a)^{\circ}$).
\end{construction}

\subsection{Gluing sequences, faces and forests}
\label{proof1}
Let $I\subset[\ell(\mu)],J\subset[\ell(\nu)]$, $\mu'\preceq_I\mu,\nu'\preceq_J\nu$, let $\Gamma$ be a bi-pruned Hurwitz galaxy of type $(g,\mu',\nu')$ and let $S\in\mathcal{S}_{I,J}((\mu',\nu'),(\mu,\nu))$. Our first step is to contruct a Hurwitz galaxy of type $(g,\mu,\nu)$ from $\Gamma$ and $S$. We note, that in a sense this is a reversal of the pruning process.

\begin{construction}
\label{con:glue}
For the above setting for $\Gamma,I,J,\mu,\nu,\mu',\nu',g$, we introduce the following construction
\begin{enumerate}
\item We set $a=0$.
\item We set $M_a=I$, $M_a'=\emptyset$ and $N_a=J$, $N_a'=\emptyset$.
\item For each $i\in M_a\backslash M_a'$, we glue a loop face into the face labelled $i$ with respect to the gluing step $\tau_j$ for each $j\in I^S_i$. Here we have many choices, especially the order.
\item Let $j\in N_a\backslash N_a'$, for each $i\in J^S_j$, we glue in loop faces into $j$ consecutively according to the gluing step $\tau_i$. Here we have many choices, especially the order.
\item We obtain a new Hurwitz galaxy $\Gamma_a$ of type $(g,\tilde{\mu}_a,\tilde{\nu_a})$, where $\tilde{\mu}_a$ ($\tilde{\nu}_a$) is indexed by $I\cup\bigcup_{j\in N_a} J^S_j$ (resp. $J\cup\bigcup_{j\in M_a} I^S_i$).
\item We set $M_{a+1}'=M_a$, $M_{a+1}=I\cup\bigcup_{j\in N_a} J_S^j$, $N_{a+1}'=N_a$ and $N_{a+1}=J\cup\bigcup_{i\in M_{a}} I_S^i$. Moreover, we increase $a$ by $1$, i.e. $a\leftarrow a+1$ and repeat steps (3)-(6) until $M_a'=[\ell(\mu)]$ and $N_a'=[\ell(\nu)]$.
\item By construction, we obtain a Hurwitz galaxy $G$ of type $(g,\mu,\nu)$.
\end{enumerate}
\end{construction}

For the rest of the discussion it will be important to  capture the perimeter of each face, the first time it appears in a Hurwitz galaxy in the above reversal pruning process in \cref{con:glue} (resp. the perimeter of each face, the last time it appears in the pruning process in the Hurwitz galaxy). This can be also be phrased as follows: Starting from the Hurwitz galaxy $G$, we obtain in \cref{con:glue}, we fix a white face $W_i$ (resp. black face $B_j$) labeled $i$ (resp. $j$). We then start the pruning process until $W_i$ (resp. $B_j$) is a loop face. We then denote the perimeter of $W_i$ (resp. $B_j$) by $\mu_i^S$ (resp. $\nu_j^S$), where $S$ is the gluing sequence, we started with.\par 
It turns out that this numbers $\mu_i^S$, $\nu_j^S$ do not depend on $\Gamma$ or $G$ but only the gluing sequence $S$: For $i\notin I$ ($j\notin J$) and a gluing sequence $S$ as above, let $x_i$ (resp. $y_j$) be the first index, such that $l_{x_i}=i$ (resp. $k_{y_j}=j$). We then see that $\mu_i^{S}=s_{k_{x_i}}$ and $\nu_j^{S}=s_{l_{y_j}}$ for $i\notin I$ and $j\notin J$. This is due to the fact that the step $x_i$ (resp. $y_j$) is the step which glues the face $W_i$ (resp. $B_j$) into the Hurwitz galaxy for the first time.\par 
For $i\in I$ and $j\in J$, we see $\mu_i^{S}=\mu'_i$ and $\nu_j^S=\nu'_j$. This corresponds to the fact that first time the faces labeled $i$ and $j$ appear is in the initial bi-pruned Hurwitz galaxy.\par

A mentioned in step (3) and (4) in \cref{con:glue} there are many free choices in this construction. We now associate families of forests to $S$, which will give us a way to group many of those choices together.

\begin{definition}
A \textit{feasible family of forests associated to} $S$ is a set $\{(F_i)_{i\in[\ell(\mu)]},(F^j)_{j\in[\ell(\nu)]})\}$, where
\begin{itemize}
\item $F_i$ is a forest on the vertex set $V(F_i))=[\mu_i^S]\sqcup I_i^S$. It has $\mu_i^S$ many components and each element of $[\mu_i^S]$ is contained in a different component. Moreover, the edges of $F_i$ are labelled by $E_i\subset[2g-2+\ell(\mu)+\ell(\nu)]$.
\item $F^j$ is a forest on the vertex set $V(F^j))=[\nu_j^S]\sqcup J_j^S$. It has $\nu_j^S$ many components and each element of $[\nu_j^S]$ is contained in a different component. Moreover, the edges of $F^j$ are labelled by $E^j\subset[2g-2+\ell(\mu)+\ell(\nu)]$.
\item the elements of $\{(E_i)_{i\in[\ell(\mu)]},(E^j)_{j\in[\ell(\nu)]}\}$ are pairwise disjoint sets.
\end{itemize}
\end{definition}

This corresponds to the above notion of contructing a Hurwitz galaxy of type $(g,\mu,\nu)$ from $\Gamma$ and $S$ in the following sense: We fix a feasible family of forests $\{(F_i)_{i\in[\ell(\mu)]},(F^j)_{j\in[\ell(\nu)]})\}$ associated to $S$. We focus on the white faces, as the procedure for black faces is completely analogously. Let $i\in[\ell(\mu)]$, such that $i\in M_a\backslash M_{a-1}$. Thus, the Hurwitz galaxy $\Gamma_{a-1}$ is the first one in the construction, which contains a white face labelled $i$. By construction the perimeter of this face is $\mu_i^S$ in $\Gamma_a$, i.e. its boundary is homeomorphic to a cycle with $m_a\cdot\mu_i^S$ many vertices, where $m_a=2g-2+\ell(\tilde{\mu}_a)+\ell(\tilde{\nu}_a)$. We divide this cycle into $\mu_i^S$ many segment consisting of paths from $1$ to $m_a$ (see \cref{fig:seg} for the case of $m_a=\mu_i^S=4$).

\begin{figure}
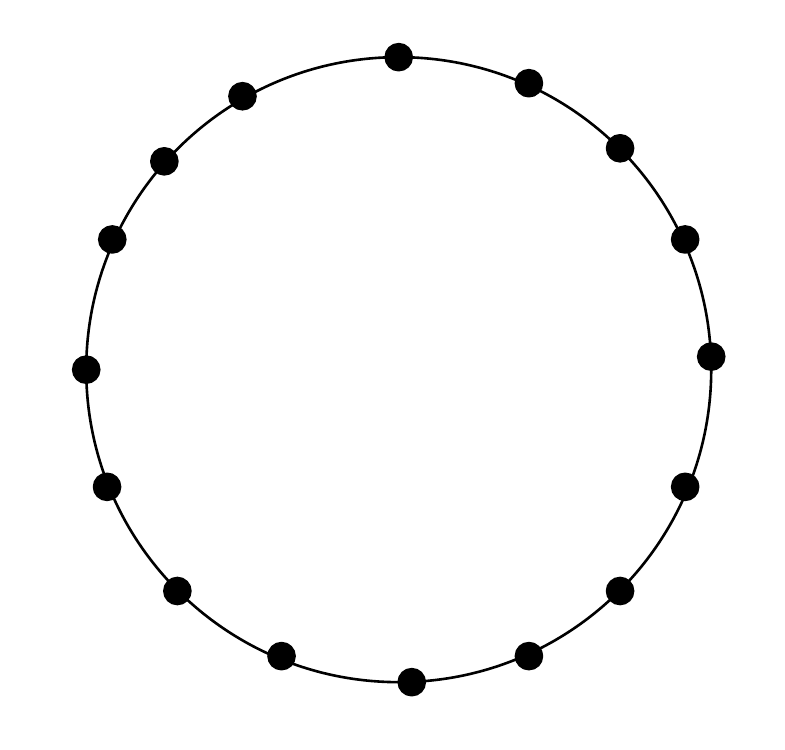
\caption{Dividing a face into segments.}
\label{fig:seg}
\end{figure}

Now, we glue faces into $i$ according to $\{(F_i)_{i\in[\ell(\mu)]},(F^j)_{j\in[\ell(\nu)]})\}$. In particular, we pick the forest $F_i$. This forest is labelled by $[\mu_i^S]\sqcup I_i^S$. Let $B_i^S\coloneq\{l_p\mid \tau_p\in I_i^S\}$. Then we can assume that $F_i$ is labelled by $[\mu_i^S]\sqcup B_i^S$, by re-labelling the vertex $\tau_p$ by $l_p$.
 
\begin{construction}
\label{con:forglue}
For the above setting, we glue all faces in $B_i^S$ into $i$ according to $F_i$.
\begin{enumerate}
\item We pick one connected component $C$ of $F_i$, which contains $r\in[\mu_i^S]$.
\item Let $k\in B_i^S$ be adjacent to $r$ with an edge labelled $e$, then we glue a loop face labelled $k$ of perimeter $\nu_k^S$ into the $r-$th segment (in the face labelled $i$) by attaching it to a vertex labelled $e$ as in \cref{con:glue}.
\item We proceed for all $k\in B_i^S$ adjacent to $r$ as in step (2).
\item Fix one $k$ as in step (2). For each $k'\in B_i^S$ adjacent to $k$ via an edge labelled $e'$, we glue a loop face into the face labelled by attaching it the face labelled $k$ at a vertex labelled $e'$.
\item We proceed as in step (4) for all adjacent vertices with labelled in $B_i^S$ in $C$.
\item We proceed as in steps (2)-(5) for all connected components $C$ of $F_i$.
\end{enumerate}
We note that the gluing for each $l_p\in B_i^S$ corresponds to the data $(k_p,l_p,s_p)^{\bullet}$ ($k_p=i)$), since in particular, $s_p=\nu_{l_p}^S$ by definition.
\end{construction}

We prove the following lemma.

\begin{lemma}
\label{lem:enum}
For the above setting let $a_p+1$ be the valency of the vertex labelled $\tau_p$. Then we have
\begin{equation}
\label{equ:multface}
\prod_{l_p\in B_i^S} (\nu_{l_p}^S)^{a_p}
\end{equation}
 choices of attaching loop faces in \cref{con:forglue}.
\end{lemma}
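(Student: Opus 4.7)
The plan is to analyse Construction~\ref{con:forglue} one edge at a time, counting for each edge of the forest $F_i$ the number of essentially distinct ways the corresponding loop face can be glued in; the product of these numbers is the claimed multiplicity. Each gluing has two independent components, a choice on the parent side and a choice on the child side, which I would treat separately.

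I would first argue that the child side always contributes a trivial factor. By Construction~\ref{con:glue}(3), the new loop face $F'$ is introduced as an abstract cycle whose $\nu_{l_p}^S\cdot(m{+}1)$ vertices carry the label pattern $1,2,\dots,m{+}1$ repeated $\nu_{l_p}^S$ times. Because the labels are periodic with period $m{+}1$, any two vertices of $F'$ of the same colour $e$ differ only by a cyclic shift of the label sequence on $F'$, and identifying any one of them with the prescribed parent vertex therefore produces the identical labelled ribbon graph after the gluing. Hence the child side contributes the factor $1$.

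For the parent side I would distinguish root from non-root edges. If the edge is incident to a root $r\in[\mu_i^S]$, the forest datum pins down the $r$-th segment of the face labelled $i$, and the edge label $e$ picks out a unique vertex of that segment, so there is exactly one choice. If the edge is incident to a non-root vertex $l_q\in B_i^S$, then $l_q$ has already been incorporated into the galaxy by an earlier step, and its attachment to its own parent breaks its rotational symmetry. Thus the $\nu_{l_q}^S$ vertices of colour $e$ that the enrichment step creates on the boundary of $l_q$ (one per segment) are pairwise distinguishable by their cyclic position relative to $l_q$'s attachment vertex, and choosing different ones produces genuinely different Hurwitz galaxies; this case contributes the factor $\nu_{l_q}^S$.

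Multiplying over all edges of $F_i$ and regrouping by parent, each $l_q\in B_i^S$ is the parent end of exactly $a_q$ edges (the edges to its $a_q$ children in the rooted forest), contributing $(\nu_{l_q}^S)^{a_q}$; root edges contribute $1$; and leaves contribute $(\nu_{l_p}^S)^0=1$. This yields the total $\prod_{l_p\in B_i^S}(\nu_{l_p}^S)^{a_p}$, which is \eqref{equ:multface}. I expect the main subtlety to be the child-side identification: one must verify carefully that the freshly created $F'$ truly carries no structure beyond its cyclic labelling at the moment of attachment, so that the apparent $\nu_{l_p}^S$ formal choices genuinely collapse to a single combinatorial outcome. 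Once this is established, the parent-side counting reduces to bookkeeping via the forest structure.
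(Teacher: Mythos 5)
Your proposal is correct and follows essentially the same route as the paper: the paper likewise observes that for each gluing of a new face $k$ onto an already-placed face $k'$ the naive $\nu_k^S\cdot\nu_{k'}^S$ choices collapse to $\nu_{k'}^S$ because the $\nu_k^S$ choices on the freshly introduced face all yield isomorphic galaxies, and then accumulates a factor $\nu_{k'}^S$ per child of $k'$. Your explicit treatment of the root edges (one choice, since a segment carries a unique vertex of each label) and the final regrouping by parent via the valency $a_p+1$ are just slightly more careful bookkeeping of the same argument.
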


\begin{proof}
Then only choices we have are given by the fact, that we have not specified the vertices at which we attach two loop faces. We fix $k\in B_i^S$ and choose $k'\in B_i^S$, which is adjacent to $k$ via an edge labelled $e$, such that $k'$ has already been glued into the face labelled $i$ in \cref{con:forglue}, but $k$ has not. For $k$ ($k'$) , there are $\nu_k^S$ (resp. $\nu_{k'}^S$) many vertices labelled $e$. Thus, we obtain a factor of $\nu_k^S\nu_{k'}^S$. However, any choice at the face labelled $k$ gives a graph in the automorphism class, which yields another factor of $\frac{1}{\nu_k^S}$. Thus, for any new face we attach to $k'$, we obtain a factor of $\nu_{k'}^S$. By proceeding as such through the contruction, we obtain the lemma.
\end{proof}

We now show that given $I\subset[\ell(\mu)],J\subset[\ell(\nu)]$, $\mu'\preceq_I\mu,\nu'\preceq_J\nu$, $S\in\mathcal{S}((\mu',\nu'),(\mu,\nu))$ and a Hurwitz galaxy $\Gamma'$ of type $(g,\mu',\nu')$ our construction in terms of gluing sequences encoded by forests yields $\mathrm{mult}_{I,J}(S)$ many Hurwitz galaxies of type $(g,\mu,\nu)$, which we construct from $\Gamma'$. In particular it only depends of the data $g$ and $\mu'\preceq_I\mu,\nu'\preceq_J\nu$ -- not on the specific Hurwitz galaxy of type $(g,\mu',\nu')$ we choose.\par 
First we see, that the number of Hurwitz galaxies of type $(g,\mu,\nu)$ we obtain is given by the number of feasible families of forests weighted by the choices given in \cref{lem:enum}. Note, that given $S$ the expression in \cref{equ:multface} only depends on the valency of the vertices. Thus, this is all the data, we need to fix. We use the following general fact.

\begin{fact}
We fix $a_1,\dots, a_n$, such that $\sum a_i=n-k$. The number of forests on a vertex set labeled by $[n]$, such that $\mathrm{val}(i)=a_i$ for $i=1,\dots,k$, $\mathrm{val}(i)=a_i+1$ for $i=k+1,\dots,n$ and such that the vertices $1,\dots, k$ are in different components is
\begin{equation}
\sum_{l\in[k]}\binom{n-k-1}{a_1,\dots,a_{l-1},a_l-1,a_{l+1},\dots,a_{n}}.
\end{equation}
\end{fact}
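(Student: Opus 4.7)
The plan is to establish a Prüfer-type bijection between the forests described in the statement and sequences $\sigma=(\sigma_1,\dots,\sigma_{n-k})$ over $[n]$ satisfying two conditions, and then count such sequences directly. Given an admissible forest $F$, I would iteratively remove the smallest-labelled non-root leaf (a leaf in $\{k+1,\dots,n\}$) and record its unique neighbour. Since every non-trivial component contains at least two leaves but only one root from $[k]$, a non-root leaf is available so long as at least one edge remains; after exactly $n-k$ such deletions only the vertices $\{1,\dots,k\}$ survive, producing a sequence $\sigma\in[n]^{n-k}$.

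Next, I would verify the statistics of $\sigma$. A root $i\in[k]$ is recorded once for each time one of its neighbours is deleted, which amounts to $\mathrm{val}_F(i)=a_i$ occurrences in $\sigma$. A non-root $i\in\{k+1,\dots,n\}$ is recorded once for each of its neighbours deleted strictly before $i$ itself, giving $\mathrm{val}_F(i)-1=a_i$ occurrences (the excluded neighbour is $i$'s parent at the moment $i$ is itself removed). When only $k+1$ vertices remain, a single edge is left, necessarily joining the last surviving non-root to some root, so $\sigma_{n-k}\in[k]$. The inverse map---reinstating, at each step, the smallest non-root vertex that does not appear in the yet-unread tail of $\sigma$ and attaching it to the current sequence entry---is well-defined and inverts $F\mapsto\sigma$, giving a bijection onto the set $\Sigma$ of sequences in which vertex $i$ occurs $a_i$ times and whose last entry lies in $[k]$.

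Finally, I would count $|\Sigma|$ by stratifying according to the value $l=\sigma_{n-k}\in[k]$: once $l$ is fixed, the first $n-k-1$ positions contain $a_l-1$ copies of $l$ and $a_i$ copies of each other $i$, so they can be filled in $\binom{n-k-1}{a_1,\dots,a_{l-1},a_l-1,a_{l+1},\dots,a_n}$ ways (interpreted as $0$ when $a_l=0$). Summing over $l\in[k]$ yields precisely the stated formula.

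The main obstacle is checking that the Prüfer procedure and its inverse are well-defined under the stated constraints---namely, that a non-root leaf is always available during deletion, that the reconstruction is deterministic at every step, and that the last entry of $\sigma$ is forced to lie in $[k]$. These verifications reduce to careful bookkeeping of how the vertex degrees and the root distribution across components evolve after each deletion, which is routine but essential.
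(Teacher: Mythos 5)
Your proof is correct. Note that the paper offers no proof of this Fact at all --- it is stated as a bare, presumably folklore, degree-constrained Cayley-type count and then used --- so there is nothing to compare against; your Pr\"ufer-code argument is a perfectly standard and appropriate way to establish it. The bijection you describe does work: every vertex $i$ (root or not) occurs exactly $a_i$ times in the code, the last entry is forced to lie in $[k]$ because the final surviving edge joins the last non-root to a root, and the stratification by the last entry gives exactly the stated sum of multinomials (with the convention that a term with $a_l-1=-1$ vanishes). Two small points you flag as ``routine bookkeeping'' are worth making explicit if you write this up: first, a non-root is never an isolated vertex since its degree is $a_i+1\ge 1$, which together with ``a tree on $\ge 2$ vertices has $\ge 2$ leaves'' guarantees a non-root leaf exists while any edge remains; second, in the decoding step the unread tail of length $n-k-t+1$ ends in a root, so it names at most $n-k-t$ distinct non-roots while $n-k-t+1$ non-roots are still unplaced, which is exactly why an eligible vertex always exists. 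An alternative, slightly slicker route is to contract the $k$ roots to a single vertex $*$ of degree $\sum_{i\le k}a_i$, apply the classical formula $\binom{m-2}{d_1-1,\dots,d_m-1}$ for trees with prescribed degrees, and then distribute the edges at $*$ back among the roots; this yields
\begin{equation}
\binom{n-k-1}{\sum_{i\le k}a_i-1,\,a_{k+1},\dots,a_n}\cdot\binom{\sum_{i\le k}a_i}{a_1,\dots,a_k}
=\frac{(n-k-1)!\,\sum_{l\le k}a_l}{\prod_i a_i!},
\end{equation}
which agrees with the stated sum. Your approach has the advantage of being self-contained and of exhibiting the explicit bijection; the contraction argument is shorter if one takes the tree case as known.
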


As for any choice of such a forest, we obtain a different graph, we combine this fact with \cref{lem:enum} and obtain a factor of 
\begin{equation}
\sum_{\substack{\underline{a}\in\mathbb{Z}^{\mu_i^S+|I^S_i|}:\\
|\underline{a}|=|I^S_i|}}\sum_{k\in [\mu_i^S]}\binom{|I^S_i|-1}{a_1,\dots,a_{k-1},a_k-1,a_{k+1},\dots,a_{\mu_i^S+|I^S_i|}}\prod_{p\in I^S_i}s_p^{a_p},
\end{equation}
which coincides with $\mathrm{mult}_{S;I,J}^{\bullet}(i)$. Further, we have to choose the edge labels of the forest. By a subset of $[2g-2+\ell(\mu)+\ell(\nu)]$ of size $2g-2+\ell(\mu')+\ell(\nu')$, we determine the labels of $4-$valent vertices of $\Gamma'$ (as the linear order of the labels must be preserved). This gives an additional factor of
\begin{equation}
\binom{2g-2+\ell(\mu)+\ell(\nu)}{2g-2+\ell(\mu')+\ell(\nu')}.
\end{equation}
Furthermore, the only condition for remaining labels of $4-$valent vertices (which corresponds to the edge labels in the forests) is that each one appears exactly ones. There are $(2g-2+\ell(\mu)+\ell(\nu))-(2g-2+\ell(\mu')+\ell(\nu'))=|I^c|+|J^c|$ many edge labels left, which gives a factor of 
\begin{equation}
(|I^c|+|J^c|)!.
\end{equation}

Combining these considerations, we obtain a factor of
\begin{equation}
\binom{2g-2+\ell(\mu)+\ell(\nu)}{2g-2+\ell(\mu')+\ell(\nu')}\cdot (|I^c|+|J^c|)!\cdot \prod_{i\in\ell(\mu)}\mathrm{mult}_{S}(i)\prod_{j\in\ell(\nu)}\mathrm{mult}_{S}(j),
\end{equation}
which coincides with $\mathrm{mult}_{I,J}(S)$. This way, we obtain the first summand in \cref{thm:main1}.\par 
The second part corresponds to constructing Hurwitz galaxies from the empty Hurwitz galaxy. We will explain this in more detail in \cref{proof2}. For now, we note that for any choice $j\in[\ell(\nu)]$ and $1\le a\le\mathrm{min}(\mu_1,\nu_j)$, there is a unique Hurwitz galaxy $G_a$ of type $(0,a,a)$. We now observe that by the same considerations as above the summand
\begin{equation}
\sum_{j\in[\ell(\nu)]}\sum_{a=1}^{\mathrm{min}(\mu_1,\nu_j)}\sum_{S\in\mathcal{S}_{\{1\},\{j\}}((a,a),(\mu,\nu))}\mathrm{mult}_{\{1\},\{j\}}(S)
\end{equation}
yields the number Hurwitz galaxies $G$ of type $(0,\mu,\nu)$ obtained by consecutively gluing loop faces into $G_a$, such that the white face (black face) of $G_a$ is labeled $1$ (resp. $j$) in any $G$ we obtain.\par 
It remains to prove that those are all the graph we obtain.

\subsection{From pruning to gluing}
\label{proof2}
We now analyse the pruning process and explain, how to obtain a gluing sequence and a feasible family of forests from this process.\par 
Let $\Gamma$ be a Hurwitz galaxy of type $(g,\mu,\nu)$ and let $E_i$ be a white loop face of $\Gamma$ labelled $i$ for some $i\in[\mu]$. We note that $E_i$ is adjacent to exactly one black face $E^j$ labelled $j$ for some $j\in[\nu]$. When we remove $E_i$ from $\Gamma$ we obtain a new Hurwitz galaxy $\Gamma'$ of type $(g,\mu',\nu')$, where $\mu'\preceq_{[\mu]\backslash\{i\}}\mu$ and $\nu'\preceq_{[\nu]}\nu$ are given by 
\begin{itemize}
\item $\mu_k'=\mu_k$ for $k\in[\mu]\backslash\{i\}$,
\item $\nu'_k=\nu_k$ for $k\neq j$,
\item $\nu'_j=\nu_j-\mu_i$.
\end{itemize}
Thus, we see that $(\mu,\nu)$ can be obtained from $(\mu',\nu')$ by a white gluing step of type
\begin{equation}
(\mu',\nu',\mu,\nu,[\mu]\backslash\{i\},[\nu],i,j,\mu_i)^{\circ}.
\end{equation}
This corresponds to reversing the removal of $E_i$, i.e. gluing the face $E_i$ into the face $E^j$.\par

We proceed similarly for black faces: For a black loop face $E^j$ of $\Gamma$ labelled $j$ from $j\in[\nu]$, we note that $E^j$ is adjacent to only one white face $E_i$ for some $i\in[\mu]$. Removing $E^j$ we obtain a new Hurwitz galaxy of type $(g,\mu',\nu')$, where $\mu'\preceq_{[\mu]}\mu$ and $\nu'\preceq_{[\nu]\backslash\{j\}}$, such that
\begin{itemize}
\item $\mu'_k=\mu_k$ for $k\neq i$,
\item $\mu'_i=\mu_i-\nu_j$,
\item $\nu'_k=\nu_k$ for $k\in[\nu]\backslash\{j\}$.
\end{itemize}

Thus, we see that $(\mu,\nu)$ can be obtained from $(\mu',\nu')$ by a black gluing step of type
\begin{equation}
(\mu',\nu',\mu,\nu,[\mu]\backslash\{i\},[\nu],i,j,\mu_i)^{\bullet}.
\end{equation}
This corresponds to reversing the removal of $E^j$, i.e. gluing the face $E^j$ into the face $E_i$. Thus, while we proceed through \cref{con:prun}, collecting the gluing steps as above, we obtain a gluing sequence $S$.\par 
The next step is defining a feasible family of forests associated to the pruning process. Let $\Gamma$ be a Hurwitz galaxy of type $(g,\mu,\nu)$. We focus on the white faces, as the case for black faces is completely analogous.\par 
We fix $i\in[\ell(\mu)]$, and consider the black faces inside the face labelled $i$, which are removed in the pruning process. We collect their labels in the set $B_i^{\Gamma}$. Note, that when all faces in $B_i^{\Gamma}$ are removed the perimeter of the face labelled $i$ is equal to $\mu_i^S$. We call this face the \textit{underlying pruned face} of $i$. We divide the underlying pruned face of $i$ into $\mu_i^S$ segments as before and define a graph $F_i$ on the vertex set $[\mu_i^S]\sqcup B_i^{\Gamma}$ by the following adjacency rules
\begin{itemize}
\item The vertex $j\in B_i^{\Gamma}$ is adjacent to $k\in[\mu_i^S]$ if the face labelled $j$ shares a vertex with the segment labelled $k$ of the underlying pruned face labelled $i$.
\item Two vertices $j,k\in B_i^{\Gamma}$ are adjacent if the corresponding faces share a vertex. We label the edge by the label of the vertex.
\end{itemize}
Since the faces in $B_i^{\Gamma}$ are removed in the pruning process the resulting graph cannot have any circles. Thus, it is a forest, which -- by construction -- has $\mu_i^S$ connected components, which each contain one vertex with label in $[\mu_i^S]$.\par 
By performing this process for all faces in $\Gamma$, we obtain a feasible family of forests.\par 
Moreover, we see that $\Gamma$ is obtained from $\Gamma'$ by applying the construction discussed in \cref{proof1} with data given by gluing sequence $S$ and the family of feasible forests $((F_i)_{i\in [\ell(\mu)]},\allowbreak(F_j)_{j\in [\ell(\nu)]})$. This follows immediately, once we observe that $B_i^{\Gamma}=B_i^S$.\par 
Note, that in the above considerations, we implicitely assumed that the underlying pruned Hurwitz galaxy of $\Gamma$ is not the empty graph. We now consider these cases. The pruning process can only yield the empty graph whenver $g=0$, due to the Riemann Hurwitz condition as there are always at least two $4-$valent vertices when $g\ge 1$ (i.e. $2g-2+\ell(\mu)+\ell(\nu)\ge 2$ for $g\ge 1$). Thus, let $g=0$ and consider a Hurwitz galaxy of type $(0,\mu,\nu)$, such that the pruning process yields the empty Hurwitz galaxy. In this case, we can assume that the white face labeled $1$ is the last one to be removed, as the pruning process is symmetric. Thus, we obtain a Hurwitz galaxy $G_a$ of type $(0,a,a)$, where $a\le\mathrm{min}(\mu_1,\nu_j)$, where $j$ is the last black face removed simultaneously with the white face labeled $1$. Thus, we proceed as above, but collect the data concerning the gluing sequences and families of forests with respect with respect to the underlying Hurwitz galaxy $G_a$. This completes the proof.

We finish with an example of associating a gluing sequence and a feasible family of forests to the pruning process of a Hurwitz galaxy.

\begin{example}
We consider the graph in \cref{ex:prun} and pruning process illustrated in \cref{fig:ex}. We remove three faces, thus the resulting gluing sequence consists of three gluing steps $S=(\tau_1,\tau_2,\tau_3)$. The first step in the pruning process removes a white face of perimeter $1$ labeled $1$ from a black face of perimeter $3$ labeled $1$. This yields $\tau_3=((6,1,1),(2,2,1,3),(6,1,1),(3,2,1,3),\allowbreak\{2,3,4\},[4],1,1,1)^{\circ}$. Similarly, we obtain $\tau_2=((5,1,1),(2,2,3),(6,1,1),(3,2,1,3),\{2,3,4\},\{1,2,\allowbreak4\},2,3,1)^{\bullet}$ and $\tau_1=((3,1,1),(2,3),(6,1,1),(3,2,1,3),\{2,3,4\},\{2,4\},2,1,2)^{\bullet}$.\par 
Moreover, the forests for each face are illustrated in \cref{fig:last}.
\end{example}
\begin{figure}
\begin{center}
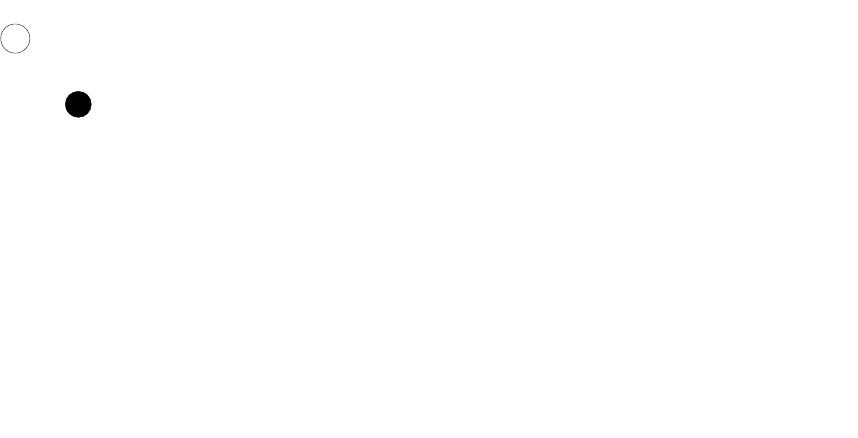
\caption{At the top of the figure the forest for each white face, on the bottom the forest for each black face in the pruning process in \cref{fig:ex}.}
\label{fig:last}
\end{center}
\end{figure}

\printbibliography
\end{document}